\newtheorem{theorem}{Theorem}[section]
\newtheorem{proposition}[theorem]{Proposition}
\theoremstyle{definition}
\newtheorem{definition}[theorem]{Definition}
\newcommand{\comp}{\circ}
\newcommand{\To}{\longrightarrow}
\renewcommand{\to}{\longrightarrow}
\renewcommand{\ker}{\ensuremath{\mathrm{ker}}}
\newcommand{\A}{\ensuremath{\mathbf{A}}}
\newcommand{\X}{\ensuremath{\mathbf{X}}}
\newcommand{\B}{\ensuremath{\mathbf{B}}}
\DeclareMathOperator{\Aut}{Aut}
\newcommand{\Pt}{\ensuremath{\mathsf{Pt}}}
\newcommand{\W}{\ensuremath{\mathsf{W}}}
\newcommand{\Set}{\ensuremath{\mathsf{Set}}}
\newcommand{\Grp}{\ensuremath{\mathsf{Grp}}}
\newcommand{\Gpd}{\ensuremath{\mathsf{Gpd}}}
\newcommand{\Simp}{\ensuremath{\mathsf{Simp}}}
\newcommand{\Cat}{\ensuremath{\mathsf{Cat}}}
\renewcommand{\a}{\alpha}
\renewcommand{\b}{\beta}
\begin{document}
% Title Block
\title{Crossed-modules and Whitehead sequences}
\author[N. Martins-Ferreira]{Nelson Martins-Ferreira}
\address{School of Technology and Management-ESTG\\
Centre for Rapid and Sustainable Product Development-CDRSP\\
Polytechnic Institute of Leiria\\
P-2411-901, Leiria, Portugal
%Departamento de Matem\'atica, Escola Superior de Tecnologia e
%Gest\~ao, \\  and Centre for Rapid and Sustainable Product Development, Instituto Poli\-t\'ecnico de Leiria, Portugal
}
\email{martins.ferreira@ipleiria.pt}

\keywords{Whitehead sequence, action-system, organic morphism, exact-right-patch, protomodular category, crossed-module, simplicial object, internal groupoid}

\subjclass[2010]{Primary 18A05; Secondary 20J15, 20L05}

\thanks{Thanks are due to M. Sobral and T. Van der Linden for helpful comments on the text. Research supported by IPleiria/CDRSP, FCT Post-doc grant SFRH/BPD/43216/2008 at CMUC and FCT project PTDC/EME-CRO/120585/2010}

\begin{abstract}
We introduce the notion of Whitehead sequence which is defined for a base category together with a system of abstract
actions over it. In the classical case of groups and group actions the  Whitehead sequences are precisely
the crossed-modules of groups. For a general setting we give sufficient conditions for the existence of a
categorical equivalence between internal groupoids and Whitehead sequences.
\end{abstract}

\maketitle

This work is dedicated to  Ronnie Brown on the occasion of his 80th anniversary.

% Text of Document.
\section{Introduction}

This work may be seen as the continuation of the project initiated in 1982 by T. Porter \cite{Porter82} in order
to generalize the so-called Brown-Spencer result \cite{Brown-Spencer} from groups to other structures. The
Brown-Spencer result establishes a categorical equivalence between crossed modules of groups and internal
groupoids in the category of groups, an important result connecting two types of objects, apparently with a
very different nature. As a consequence, this result has significant applications in homotopy theory, homology,
cohomology, K-theory and higher dimensional categorical algebra, among others.

Over the last three decades many authors studied this specific problem.
The original result, although already known, was first published in 1976 \cite{Brown-Spencer}. In 1982
J.-L. Loday generalizes it to  higher-dimensions \cite{Loday} by introducing the notion of Cat-n-group.
During the 80's much work was done, either as applications of the original result or as generalizations of it,
especially in categories of groups with operations, as it can be seen for example in \cite{Porter87}.
In the 90's R. Brown and his School were still active in this area as one can see in \cite{Brown:Groupoids}
and its references, as well as  several other authors. For example internal categories and internal groupoids
started to be exhaustively studied, first in the context of Mal'tsev categories and later in the context of
semiabelian categories. This work culminated with the notion of internal crossed module by G. Janelidze,
see \cite{Janelidze} which also contains some historical notes.

The main motivation for the present work was the possibility of moving from the category of internal actions,
defined in the context of a semiabelian category, to a more general context of categories and functors such as
the one we introduce in Section \ref{sec:defs}, in which $\B$ is any pointed category while $\A$ can be interpreted
as a category of abstract actions on $\B$.

After a close analysis of some of the results obtained during the last three or four decades we concluded that
many of the generalizations of the notion of crossed module were obtained by calculating simpler descriptions of
internal groupoids. The perspective that we have adopted in this work is somehow different. We consider a general
system   in which a certain sequence of two morphisms without any further assumptions is considered. We call it a
Whitehead sequence. Accordingly, we define a crossed module as a Whitehead sequence to which an internal groupoid
structure can be associated in a  canonical manner, an idea that we will make precise later.

Consider a system of functors and categories displayed as
\begin{equation}\label{system}
\vcenter{\xymatrix@=3em{\A \ar@<1.5ex>[r]^-{I} \ar@<-1.5ex>[r]_-{J} & \B \ar[l]|-{G} }}
\end{equation}
and such that $IG=1_{\B}=JG$. A sequence of morphisms in $\A$ of the form
\[
\vcenter{\xymatrix{GJA \ar[r]^-{v} & A \ar[r]^-{u} & GIA }}
\]
is called a \emph{Whitehead sequence} whenever
\[
I(u)=1_{IA},\ J(v)=1_{JA}, \  I(v)=J(u).
\]

Our main goal here is to find reasonable conditions under which we have an equivalence of categories
\begin{equation}
\W(\A) \sim \Gpd(\B)
\end{equation}
between the category of Whitehead sequences in $\A$ and the category of internal groupoids in $\B$,
the guiding example being the case where $\B$ is the category of groups and $\A$ is the category of group
actions on groups. The functors $I$ and $J$ are the obvious projections (see Section \ref{sec:examples})
while $G$ gives the action by conjugation \cite{Porter82}.
%Specifically, given an action $A=(X,B,\phi)$ where $X$ and $B$ are groups and $\phi:B \To \Aut(X)$ is a group %homomorphism, we have $J(A)=X$, $I(A)=B$ and for every group $B$, $G(B)=(B,B,conj)$ where $conj$ denotes the action by %conjugation of $B$ onto itself
The functor $G$ has a left adjoint, $F$, which corresponds to the well known construction of the semidirect product
in groups (see also \cite{Bourn-Janelidze:Semidirect} and \cite{Janelidze}). In this case a Whitehead sequence is
precisely a crossed module.
% Direct calculations then show that all the conditions in the theorem are satisfied in this case. Hence this result is a generalization for the so called Brown-Spencer Theorem \cite{Brown-Spencer}.
%%\end{quote}

A crossed module, as introduced by J.H.C. Whitehead \cite{Whitehead-CHII},
in the category of groups
consists of a pair $(A,h)$ in which $A \in \A$ is an action (the group $IA$ acts on the group $JA$) and $h:JA \to IA$ is a group homomorphism such that there is a Whitehead sequence
\[
\vcenter{\xymatrix{GJA \ar[r]^-{v} & A \ar[r]^-{u} & GIA }}
\]
with $J(u)=h=I(v)$.

This notion of crossed module was already presented in \cite{Porter82}. Here we illustrate the general system of
categories and functors (\ref{system}) and  motivate the definition of Whitehead sequence which,  in the particular
case where $\B$ is the category of groups and $\A$  the category of group actions,  gives the classical notion of a
crossed module.

First we give additional conditions on the general system of categories and functors  (\ref{system})
in which  $\B$ is a pointed category while $\A$ (under some reasonable conditions) is to be understood as
a category of actions. More specifically, an object $A$ in $\A$ is considered as an action of the object
$IA$ on the object $JA$ in $\B$ and if $B$ is an object in $\B$ then $G(B)$ is considered as an action
(by conjugation) of $B$ onto itself.

One of the important aspects of this construction is that  we can always define the notion of Whitehead sequence
as a triple $(A,u,v)$, in $\A$, of the form
\[
\vcenter{\xymatrix{GJA \ar[r]^-{v} & A \ar[r]^-{u} & GIA }}
\]
such that
\[
I(u)=1_{IA},\ J(v)=1_{JA}, \  I(v)=J(u),
\]
and the question is: when does it make sense to call such a sequence a crossed module? One possible answer is:
whenever it has an associated groupoid structure.

Next we describe the main ideas that lead us to the notion of category of actions we introduce here.

Concerning the one dimensional case, we assume that $I$ and $J$ are jointly faithful. This restriction means that
an action, in general, can be understood as a triple $A=(X,\xi,B)$ where $\xi$ is some kind of structure defined
on $B$ and $X$, while a morphism $f:A \to A'$ is always a pair of morphisms $f_1:JA \to JA'$ and $f_2:IA \to IA'$
in $\B$ satisfying some compatibility conditions with respect to the structures involved.
With this restriction we have that a Whitehead sequence is determined by a pair $(A,h)$ where $A$ is an action,
i.e. an object in $\A$, $h:JA \to IA$ is a morphism in $\B$, and the existence of $u$ and $v$ as in the definition
of Whitehead sequence becomes a property of $A$ and $h$, giving, in the case of groups \cite{Porter82}, the
celebrated conditions for a crossed module (equations  (\ref{xmod-1}) and (\ref{xmod-2})
of Section \ref{sec:examples}). Note that equation (\ref{xmod-1}) is equivalent to the existence of $u$, while the
equation (\ref{xmod-2}) is equivalent to the existence of $v$.

In  higher dimensions, to assume the above restriction is too much. We will often be interested in considering that
the 2-cells are also involved and in that case a morphism between actions can be a triple $f=(f_1,f_2,f_3)$ where
$f_1$ and $f_2$ are still morphisms as above but $f_3$ may be a 2-cell linking the two structures. This is what
happens in the case of categorical groups \cite{Categorical-XMod}. However, also in this case, the 2-cells involved
are determined up to equivalence. In the following we are going to consider only the one-dimensional level.
Nevertheless, the theory of action-systems presented here is delineated having in mind its application in a
two-dimensional setting.
%
%For the purpose of this work we will say that a category $\A$ in a system as \begin{equation}
%\vcenter{\xymatrix@=3em{\A \ar@<1.5ex>[r]^-{I} \ar@<-1.5ex>[r]_-{J} & \B \ar[l]|-{G} }}
%\; , \; IG=1_{\B}=JG,
%\end{equation}
%is a category of actions, and hence an object $A$ in $\A$ will be called an action, if the following conditions hold:
%(i) the pair of functors $(J,I)$ is jointly faithful;
%(ii) the functor $I$ is a fibration and if $\a:E \to A$ is cartesian then $J(\a)$ is an isomorphism;
%(iii) there is a natural transformation $\pi:1_{\A} \to GI$ uniquely determined by the following two conditions (assuming also $\B$ to be pointed)
%\[
%I \comp \pi = 1_I \;\; ,\;\; J \comp \pi = 0;
%\]
%(iv) the functor $G$ has a right adjoint $(F,G,\eta,\varepsilon)$.
%
%Some examples are in order.

This work is organized as follows. In Section \ref{sec:defs} we introduce the setting and give the basic definitions.
 A (right) patch is a jointly epimorphic cospan with the property that there exists a retraction of the right inclusion.
 If this retraction is the cokernel of the left inclusion then we speak of an exact (right) patch
 (Definitions \ref{def:patch} and \ref{def:exactpatch}). A patch is stable if the pullback of its retraction along
 any morphism exists and is a (right) patch (Definition \ref{def:stablepatch}). We briefly recall the well-known
 concepts of cartesian morphism and fibration. With respect to an ordered pair, $(I,J)$, of functors we define the
 notion of organic morphism (Definition \ref{def:organic}): a morphism $f\colon{A\to E}$ is organic (or $(I,J)$-organic)
  if $IE\cong JE$ and the two components $I(f)$ and $J(f)$ give rise to an exact patch.

The notion of a system such as (\ref{system}) that models a system of actions over the base category $\B$ is given
in  Definition \ref{def:actionsystem} and it is called an action-system of $\A$ over $\B$. One of the key ingredients
of the definition is what we call the L-condition (in honour of Jean Louis Loday \cite{Loday}). We point out that this
condition (see Definition \ref{def:Loday}) in the context of a semiabelian category is precisely the so-called
\emph{Smith is Huq} condition \cite{MF+VdL:12}.

In Section \ref{sec:examples} we present the main examples that have been the guiding lines for this work.
If $\B$ is a pointed category with pullbacks along split epimorphisms and binary coproducts then we can always
consider the two extreme cases. The first case is displayed as
\begin{equation}
\vcenter{\xymatrix@=3em{\A=\B \times \B \ar@<1.5ex>[r]^-{\pi_2} \ar@<-1.5ex>[r]_-{\pi_1} & \B \ar[l]|-{\Delta}}},
\end{equation}
while the second one is obtained by taking  a system such as the one displayed in (\ref{system}) with
$IG=1_{\B}=JG$, in which $\A$  is a subcategory of $\Pt(\B)$ consisting of those split epimorphisms in $\B$
\[
\xymatrix{X  \ar[r]^{k} &  Y \ar@<0.5ex>[r]^{p} & B \ar@<.5ex>[l]^{s}}
\]
such that the kernel $k$ and the section $s$ are jointly epimorphic. In order to have
the functor $G$ well defined with $G(B)$ the canonical split epimorphism
\[
\xymatrix{B  \ar[r]^-{\left\langle 0, 1\right\rangle} &  B\times B \ar@<0.5ex>[r]^-{\pi_2} & B \ar@<.5ex>[l]^-{\left\langle 1, 1\right\rangle},}
\]
for every object $B$ in $\B$, the pair of morphisms
$(\left\langle 0, 1\right\rangle, \left\langle 1, 1\right\rangle)$ must be jointly epimorphic. Concrete examples can be constructed by taking $\B$ equipped with a  forgetful (faithful and preserving binary products)  functor into the category of algebras with one constant and one binary operation, say $(X,0,/)$, satisfying the
 conditions
\begin{align}
    x/y=x'/y \implies x=x' \\
    x/x=y/y
\end{align}
where the homomorphisms are the mappings $f:X \to X'$ such that
\begin{align}
    f(x/y)=f(x)/f(y) \\
    f(0)=0.
\end{align}

In this case, the left adjoint to $G$, which is comparable to the semidirect product construction in the monadic
approach of internal actions, is simply the projection of the middle object of a split extension. Some attempts
were done in order to find a categorical notion of semidirect product (see for example \cite{Berndt}). We believe
that, in the setting of an action-system (\ref{system}) as we proposed in this paper, the notion of semidirect
product for an object $A$ in $\A$ is the object $F(A)$ in $\B$, with $F$ the left adjoint of the functor $G$,
when it exists.

 Our main result is presented in Section \ref{sec:main result}. It gives sufficient conditions  to have the
 desired categorical equivalence between Whitehead sequences and internal groupoids. This result relies on
 several other more technical results, such as a simplicial construction
 (Proposition \ref{prop:simplicialconstruction}), or an induced functor from certain kind of Whitehead sequences
 into the category of internal categories (Theorem \ref{thm:1}) which are developed on Sections \ref{sec:simp}
 and \ref{sec:cat}.

Finally, in Section \ref{sec:conclusion}, we present the case when the category $\B$ is pointed and protomodular.

\section{Basic definitions and properties}\label{sec:defs}

Let $\B$ be a pointed category.

\begin{definition}\label{def:patch}
A (right) \textbf{patch} in $\B$ is a cospan \[\xymatrix{X\ar[r]^k&Y&B\ar[l]_{s}}\] in which the pair $(k,s)$ is
jointly epimorphic and there exists a (necessarily unique) morphism $p\colon{Y\to B}$ with $ps=1_B$ and $pk=0$.
\end{definition}

Similarly we can define a left patch (by requiring the existence of a morphism $q\colon{Y\to X}$ with $qk=1_X$ and
$qs=0$) but, since here we are going to deal only with right patches we will call them just patches.

Two examples that illustrate the notion can be obtained as follows. Let $\B$ be a pointed category with kernels and
pushouts.
\begin{enumerate}
\item Every coproduct diagram in $\B$ is a patch
\[\xymatrix{X\ar[r]^(.4){\iota_X}&X+B\ar@<.5ex>@{-->}[r]^(.6){[0,1]}&B\ar@<.5ex>[l]^(.4){\iota_B}.}\]

\item If we denote by $k_0\colon{B\flat X\to X+B}$ the kernel of $[0,1]\colon{X+B\to B}$ and let
$\eta_X\colon{X\to B\flat X}$ be such that $k_0\eta_X=\iota_X$, then every morphism
$\xi\colon{B\flat X\to X}$ satisfying the condition $\xi\eta_X=1_X$ induces,
by taking the pushout of $\xi$ and $k_0$, a patch in $\B$  as illustrated by
\[\xymatrix{X\flat B\ar[r]^{k_0}\ar@<-.5ex>[d]_{\xi}&X+B\ar[d]^{\iota_2}\ar@<.5ex>[r]^(.6)
{[0,1]}&B\ar@<.5ex>[l]^(.4){\iota_B}\\X\ar@<-.5ex>[u]_{\eta_X}\ar[r]^{\iota_1}& Q \ar@<.5ex>@{-->}[r]^(.5){p}
&B\ar@<.5ex>[l]^{\iota_2\iota_B}\ar@{=}[u]}\]
The needed morphism $p$ is uniquely determined by $p\iota_2=[0,1]$ and $p\iota_1=0$. Moreover, since
$(Q,\iota_1,\iota_2)$ is a pushout diagram, we have that $(\iota_1,\iota_2)$ is a jointly epimorphic pair.
In order to prove that the pair $(\iota_1,\iota_2\iota_B)$ is jointly epimorphic we  observe that
$\iota_1=\iota_2\iota_X$, indeed
\[\iota_1=\iota_1\xi\eta_X=\iota_2k_0\eta_X=\iota_2\iota_X,\]
and from here it follows that  $(X,Q,B,\iota_1,\iota_2\iota_B,p)$ is a patch.
\end{enumerate}

%We can also distinguish amongst all patches, the ones that  are exact or the ones that are stable.

It will be relevant for us to  differentiate the patches that are exact and the patches that are stable under pullback,
according to the following definitions:

\begin{definition}\label{def:exactpatch}
A patch $(X,Y,B,k,s,p)$ in $\B$ is said to be an \textbf{exact patch} if the morphism $k\colon{X\to Y}$
is the kernel of
the morphism $p\colon{Y\to B}$.
\end{definition}

The morphisms $\xi\colon{B\flat X\to X}$, in the second example above, that induce an exact patch are precisely the
strict actions in the sense of \cite{MF+S:12}, see also \cite{Hartl}. Moreover, in the category of pointed sets and in
the category of abelian groups every coproduct diagram is an exact patch. Indeed, in both cases, we have
that $\iota_X$ is the kernel of $[0,1]$.

\begin{definition}\label{def:stablepatch}
A patch $(X,Y,B,k,s,p)$ in $\B$ is said to be a \textbf{stable patch} if for every $h\colon{Z\to B}$, the pullback of
$p$ along $h$ exists in $\B$, and the induced cospan
\[\xymatrix{X\ar[r]^(.4){\langle k,0\rangle} & Y\times_B Z & Y\ar[l]_(.35){\langle sh,1_Y\rangle}}\]
is a patch in $\B$.
\end{definition}

In the category of abelian groups every coproduct diagram is a stable patch. This is not true in the category of pointed
sets. Indeed any cospan
\[\xymatrix{X\ar[rr]^(.3){\langle \iota_X,0\rangle}&&(X+B)\times_{B} Y&&Y\ar[ll]_(.3){\langle \iota_B h,1_Y\rangle}}\]
which is obtained by taking the pullback of the morphism $$[0,1]\colon{X+B\to B}$$ along a given morphism $h\colon{Y\to B}$
is a patch if and only if the kernel of $h$ is trivial.

Let $I\colon{\A\to\B}$ be a functor. We recall that:
\begin{enumerate}
%\begin{definition}\label{def:cartesian} É PRECISO para futuras referências?
\item[] A morphism $\alpha\colon{E\to A}$ in $\A$ is \textbf{cartesian} (or $I$-cartesian) if for every $g\colon{W\to A}$ in
$\A$ and every $h\colon{I(W)\to I(E)}$ in $\B$, with $I(\alpha)h=I(g)$, there exists a unique $u\colon{W\to E}$ in $\A$
such that $\alpha u=g$ and $I(u)=h$.
%\end{definition}
\item[] When every morphism in $\B$ can be lifted to an $I$-cartesian morphism in $\A$ we say that the functor $I$ is a \textbf{fibration}. More specifically, the functor $I$ is a fibration if for every $A$ in $\A$ and $p\colon{Y\to IA}$ in $\B$ there exists a cartesian
morphism (called the cartesian lifting of $p$ along $A$), $\alpha\colon{E\to A}$, with $I(\alpha)=p$.
\end{enumerate}

%\begin{definition}\label{def:fibration}
%The functor $I$ is a \textbf{fibration} if for every $A$ in $\A$ and $p\colon{Y\to IA}$ in $\B$ there exists a cartesian
%morphism (called the cartesian lifting of $p$ along $A$), $\alpha\colon{E\to A}$, with $I(\alpha)=p$.
%\end{definition}

From now on we consider, other than the functor $I$ another functor $J$.
Let $(I,J)$ be an ordered pair of functors $I,J\colon{\A\to\B}$, which will be displayed either horizontally or
vertically as
\[\xymatrix{\A\ar@<-.5ex>[r]_{J}\ar@<.5ex>[r]^{I}&\B}\quad \text{ or }\quad \xymatrix{\A\ar@<-.5ex>[d]_{J}
\ar@<.5ex>[d]^{I}\\\B.}\]
In this context we consider a special class of morphisms in $\A$ that we call \emph{organic} (due to the fact
that their components under $I$ and under $J$  form an exact patch). Note that on the vertical display the functor $J$ appears on the left while the functor  $I$ appears on the right, although the ordered pair of functors is $(I,J)$.

\begin{definition}\label{def:organic}
A morphism $f\colon{A\to E}$ in $\A$ is said to be a \textbf{organic} morphism (or $(I,J)$-organic) if $J(E)\cong I(E)$ and the
cospan \[\xymatrix{JA\ar[r]^(.4){J(f)}&JE\cong IE&IA\ar[l]_(.3){I(f)}}\] is an exact (right) patch in $\B$.
\end{definition}

%We will also say that $f$ is \emph{very special} when it is special in the sence above and in addition the cospan $(J(f),I(f))$ is a stable patch. The names special and very special are of course not intended to be used outside this paper ---  off the right context they are  meaningless.

Finally, we complete the setting by introducing another ingredient --- the Whitehead sequence --- and
the definition of $L$-condition and of action-system.

Let $(I,G,J)$ be an ordered triple of functors, displayed as
\begin{equation}\label{eq:IGJ}
%\xymatrix{\A\ar@<2.5ex>[r]^{I}\ar@<-1ex>[r]_{J}&\B\ar[l]_{G},}\end{equation}
\vcenter{\xymatrix@=3em{\A \ar@<1.5ex>[r]^-{I} \ar@<-1.5ex>[r]_-{J} & \B \ar[l]|-{G} }}
\end{equation}
and such that $IG=1_{\B}=JG$.

\begin{definition}\label{def:Whitehead}
A  \textbf{Whitehead sequence} is a triple $(A,u,v)$ where $A$ is an object in $\A$, while $u$ and $v$ are morphisms
in $\A$, of the form
\begin{equation}\label{eq:Whitehead}
\xymatrix{GJ(A)\ar[r]^(.6){v}&A\ar[r]^(.4){u}&GI(A),}
\end{equation}
satisfying the following conditions
\begin{eqnarray}
I(u)=1_{IA}\\
J(v)=1_{JA}\\
I(v)=J(u).
\end{eqnarray}
\end{definition}

%The definition \ref{def:Whitehead} and the number of the equation \ref{eq:Whitehead}.

\begin{definition}\label{def:Loday}
We say that the \textbf{L-condition} holds for the triple of functors $(I,G,J)$ when for every diagram of solid arrows
\[\xymatrix{GJE\ar@{-->}[r]^{g'}\ar[rd]_{g}&E\ar@{-->}[r]^{f'}\ar@<-.5ex>[d]_{\alpha}&GIE\\&A\ar@<-.5ex>[u]_{\beta}
\ar[ru]_{f}}\]
with $I(\beta)=I(f)$, $J(\alpha)=J(g)$, $I(\alpha)J(f)=I(g)J(\beta)$ and $\alpha\beta=1_A$, if $\alpha$ is cartesian
and $f$ is organic then there exists a unique Whitehead sequence $(f',g')$ such that $\alpha g'=g$ and $f'\beta=f$.
\end{definition}

\begin{definition}\label{def:actionsystem}
A triple of functors $(I,G,J)$ is called an \textbf{action-system} of $\A$ over $\B$ when:
\begin{enumerate}
\item the functor $I$ is a fibration and $J(\alpha)$ is an isomorphism whenever $\alpha$ is a cartesian morphism;
\item for every $A$ in $\A$ there exists an object $Y\in B$ and a morphism $$f\colon{A\to G(Y)}$$ such that
$f$ is organic and, moreover, it is universal from $A$ to $G$;
\item the L-condition holds.
\end{enumerate}
\end{definition}

The three main examples that have motivated these definitions that is
(a) groups,
(b) abelian groups and
(c) pointed sets, are presented in some detail in the following section. It is expected that,
due to their generality, these definitions will be applicable in a wide variety of cases, allowing, in particular,
the study of internal categories and internal groupoids via Whitehead sequences in general contexts.

Some immediate consequences of the definitions are the following.

\begin{proposition}\label{prop:basicprops}
Let $(I,G,J)$ be an action-system of $\A$ over $\B$. Then
\begin{enumerate}
\item[(i)] the functor $G$ has a left adjoint;

\item[(ii)] there exists a unique natural transformation $\pi\colon{1_{\A}\to GI}$ such that for every object
$A$ in $\A$, $I(\pi_A)=1_{IA}$ and $J(\pi_A)=0$;

\item[(iii)] there exists a functor $\A\to\Pt(\B)$;

\item[(iv)] for every Whitehead sequence $(A,u,v)$ there exists, up to isomorphism, a unique diagram
in $\A$
\[\xymatrix{E\ar[r]^(.4){\mu}\ar@<-.5ex>[d]_{\alpha}&GFA\\A\ar@<-.5ex>[u]_{\beta}\ar[ru]_{\eta_A}}\]
in which $\eta_A$ is the universal arrow from $A$ to the functor $G$, $\alpha$ is a cartesian morphism,
$IE\cong FA$, and such that
\begin{eqnarray*}
GI(\alpha)\eta_A&=&u\\
\alpha\beta&=&1_A\\
I(\beta)&=&I(\eta_A)\\
\mu\beta&=&\eta_A\\
I(\mu)&=&1_{FA};
\end{eqnarray*}
\item[(v)] every Whitehead sequence $(A,u,v)$ induces another Whitehead sequence, say $(E,\mu,\nu)$, with the
property that there exists a cartesian split epimorphism $$\alpha\colon{E\to A}$$ (with a section $\beta$) such
that $\mu\beta=\eta_A$ and $\alpha\nu=vGJ(\alpha)$;
\item[(vi)] every Whitehead sequence $(A,u,v)$ induces an infinite sequence of cartesian split epimorphisms
\[
\xymatrix{\cdots &
A_3 \ar[r]^{\alpha_3} & A_2 \ar[r]^{\alpha_2} & A_1 \ar[r]^{\alpha_1} & A_0=A
}
\] which is uniquely determined by $GI(\alpha_1)\eta_{A}=u$ and if  $\beta_i$ is the section of $\alpha_i$,
for every $i=1,2,\ldots$, by the equations
\begin{eqnarray*}
\alpha_i\beta_i&=&1_{A_{i-1}}\\
I(\beta_i)&=&I(\eta_{A_{i-1}})\\
GI(\alpha_{i+1})\eta_{A_i}\beta_{i}&=&\eta_{A_{i-1}}\\
I(\alpha_{i+1})I(\eta_{A_i})&=&1_{IA_i}.
\end{eqnarray*}
\end{enumerate}

\end{proposition}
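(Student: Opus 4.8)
The plan is to establish the six items in order, each leaning on its predecessors, and to use the three clauses of Definition~\ref{def:actionsystem} only where they are needed. For (i): clause~(2) of Definition~\ref{def:actionsystem} provides, for every object $A$ of $\A$, a morphism $\eta_A\colon A\to G(FA)$ which is universal from $A$ to $G$, and this is exactly the assertion that $G$ has a left adjoint $F$ with unit $\eta$. For (ii) I would use that each $\eta_A$ is organic: the cospan $JA\xrightarrow{J(\eta_A)}FA\xleftarrow{I(\eta_A)}IA$ is then an exact patch, so it comes with a retraction $p_A\colon FA\to IA$ satisfying $p_A\comp I(\eta_A)=1_{IA}$ and $p_A\comp J(\eta_A)=0$, and the pair $(I(\eta_A),J(\eta_A))$ is jointly epimorphic. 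Setting $\pi_A:=G(p_A)\comp\eta_A\colon A\to GI(A)$ and using $IG=JG=1_{\B}$ gives $I(\pi_A)=p_A\comp I(\eta_A)=1_{IA}$ and $J(\pi_A)=p_A\comp J(\eta_A)=0$. Uniqueness and naturality follow from the universal property: every morphism $A\to G(B)$ has the form $G(h)\comp\eta_A$ for a unique $h\colon FA\to B$, and $h$ is already determined by the composites $h\comp I(\eta_A)$ and $h\comp J(\eta_A)$ because $(I(\eta_A),J(\eta_A))$ is jointly epimorphic; applied to a candidate for $\pi_A$ this forces $h=p_A$, and applied to the two morphisms $\pi_{A'}\comp a$ and $GI(a)\comp\pi_A$ (which factor identically through $\eta_A$, by the same joint-epimorphy together with $IG=JG=1_{\B}$) it yields naturality of $\pi$.

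Item (iii) is then formal: transposing $\pi\colon 1_\A\To GI$ across $F\dashv G$ produces, by naturality of $\pi$, a natural transformation $p\colon F\To I$ with components $p_A$, while $I(\eta)\colon I\To F$ is natural with $p_A\comp I(\eta_A)=I(\pi_A)=1_{IA}$, so $A\mapsto(p_A\colon FA\to IA,\,I(\eta_A))$ is a split epimorphism in $[\A,\B]$, equivalently a functor $\A\to\Pt(\B)$. For the construction in (iv), start from a Whitehead sequence $(A,u,v)$ and let $\bar u\colon FA\to IA$ be the transpose of $u$, so $G(\bar u)\comp\eta_A=u$ and $\bar u\comp I(\eta_A)=I(u)=1_{IA}$. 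Let $\alpha\colon E\to A$ be the $I$-cartesian lifting of $\bar u$ along $A$ (available since $I$ is a fibration): then $IE\cong FA$, $I(\alpha)=\bar u$, and $J(\alpha)$ is an isomorphism by clause~(1) of Definition~\ref{def:actionsystem}. Because $\bar u\comp I(\eta_A)=1_{IA}$, cartesianness of $\alpha$ supplies a unique $\beta\colon A\to E$ with $\alpha\beta=1_A$ and $I(\beta)=I(\eta_A)$, and then $GI(\alpha)\comp\eta_A=G(\bar u)\comp\eta_A=u$.

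To produce $\mu$ and $\nu$ I would feed this into the L-condition (Definition~\ref{def:Loday}), taking the organic morphism $f:=\eta_A\colon A\to GI(E)$ (organic since $\eta_A$ is and $GI(E)=G(FA)$) and $g:=v\comp GJ(\alpha)\colon GJ(E)\to A$. Its four hypotheses then reduce, via the Whitehead relations and $J(\alpha)J(\beta)=1_{JA}$, to $I(\beta)=I(f)$, $J(g)=J(v)\comp J(\alpha)=J(\alpha)$, $\alpha\beta=1_A$, and $I(\alpha)J(f)=\bar u\comp J(\eta_A)=J(u)=I(v)=I(g)J(\beta)$. The L-condition thus yields a unique Whitehead sequence $(f',g')$ on $E$ with $\alpha g'=g$ and $f'\beta=f$; set $\mu:=f'$, $\nu:=g'$, so $\mu\beta=\eta_A$, $I(\mu)=1_{IE}$ and $\alpha\nu=v\comp GJ(\alpha)$. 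Uniqueness up to isomorphism in (iv) comes from uniqueness of $I$-cartesian liftings together with the uniqueness clause of the L-condition, and item (v) is exactly what this construction records.

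For (vi) I would iterate: put $A_0:=A$, $(u_0,v_0):=(u,v)$, and let $\alpha_{i+1}\colon A_{i+1}\to A_i$ with section $\beta_{i+1}$ and Whitehead sequence $(A_{i+1},u_{i+1},v_{i+1})$ be what applying (iv)--(v) to $(A_i,u_i,v_i)$ produces. Then $\alpha_i\beta_i=1_{A_{i-1}}$ and $I(\beta_i)=I(\eta_{A_{i-1}})$ come directly from (iv), $I(\alpha_{i+1})\comp I(\eta_{A_i})=\bar u_i\comp I(\eta_{A_i})=1_{IA_i}$ likewise (with $\bar u_i$ the transpose of $u_i$), and $GI(\alpha_{i+1})\comp\eta_{A_i}\comp\beta_i=u_i\comp\beta_i=\eta_{A_{i-1}}$ combines $GI(\alpha_{i+1})\comp\eta_{A_i}=u_i$ (from (iv) at stage $i$) with $u_i\comp\beta_i=\eta_{A_{i-1}}$ (from (v) at stage $i-1$); the normalisation $GI(\alpha_1)\comp\eta_A=u$ together with these equations determines the whole tower up to isomorphism by induction on $i$. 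The main obstacle is item (iv): the real work is to line up the data of the L-condition correctly --- in particular the choice $g=v\comp GJ(\alpha)$ and the verification of the compatibility $I(\alpha)J(f)=I(g)J(\beta)$ --- and to check that the $I$-cartesian lifting is well enough behaved (via clause~(1)) that the resulting $\mu=f'$ really has identity $I$-component over $FA$; the remaining items are bookkeeping with $IG=JG=1_{\B}$ and the joint epimorphy built into patches.
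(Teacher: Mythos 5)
Your proposal is correct and follows essentially the same route as the paper: the adjunction from the universal organic arrows, $\pi_A$ obtained from the retraction of the exact patch $(J(\eta_A),I(\eta_A))$, the cartesian lifting of the transpose of $u$ for $\alpha$ and $\beta$, the L-condition applied with $f=\eta_A$ and $g=v\comp GJ(\alpha)$ to get $(\mu,\nu)$, and iteration for the tower. Your explicit verification of the four hypotheses of the L-condition is a welcome elaboration of a step the paper leaves implicit.
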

\begin{proof} $\quad$
\begin{enumerate}
\item[(i)] Since, by Definition \ref{def:actionsystem}(2), for every object $A$ in $\A$, there exists an object
$FA$ in $\B$ and an arrow $\eta_A\colon{A\to GFA}$ which, in particular, is universal from $A$ to the functor $G$,
it follows directly from Theorem~2(ii), on page 83 of \cite{ML}, that $G$ is (the right) part of an adjuntion
$(F,G,\eta,\varepsilon)$.
\item[(ii)] Using the previous adjunction $(F,G,\eta,\varepsilon)$, we observe that the existence of a
morphism $\pi_A\colon{A\to GIA}$ such that $I(\pi_A)=1_{IA}$ and $J(\pi_A)=0$, is equivalent to the existence of a
morphism \[\xymatrix{FA\ar[rr]^{\varepsilon_{IA}F(\pi_A)}&&IA}\]such that $\varepsilon_{IA}F(\pi_A)I(\eta_A)=1_{IA}$
and $\varepsilon_{IA}F(\pi_A)J(\eta_A)=0$. The assumption (see Definitions \ref{def:actionsystem}(2), \ref{def:organic}
and \ref{def:patch}) that $\eta_A\colon{A\to GFA}$ is a patch guarantees the existence, as well as the uniqueness,
of $\varepsilon_{IA}F(\pi_A)$ and hence of $\pi_A$. The naturality of $\pi$ follows from the naturality of $\eta$
and $\varepsilon$. Further details on this construction can be found in \cite{BeppeNMF}.
\item[(iii)] Using again the adjunction $(F,G,\eta,\varepsilon)$ and the natural transformation
$$\pi\colon{1_{\A}\to GI},$$
from the two items above, we observe that to every $A$ in $\A$ we can associate the split extension
\[
\xymatrix@=3em{JA  \ar[r]^-{J(\eta_A)} &  FA \ar@<0.5ex>[r]^-{\varepsilon_{IA}F(\pi_A)} & IA \ar@<.5ex>[l]^-{I(\eta_A)}.}
\]
Further details about this construction can be found in \cite{BeppeNMF}.
\item[(iv)] Let $(A,u,v)$ be a Whitehead sequence. We will first show how to obtain the morphisms $\alpha$, $\beta$
and $\mu$ and then show that they are uniquely determined by the properties required.
The morphism $\alpha\colon{E\to A}$ is the cartesian lifting of the morphism $\varepsilon_{IA}F(u)\colon{FA\to IA}$,
which exists because the functor $I$ is a fibration,
%Diria que não vale a pena (Definition \ref{def:actionsystem}(1)),
and it is such that $IE=FA$ and $I(\alpha)=\varepsilon_{IA}F(u)$ or equivalently, via the adjunction,
$GI(\alpha)\eta_A=u$. The morphism $\beta\colon{A\to E}$ is obtained as the unique morphism such that
$\alpha\beta=1_A$ and $I(\beta)=I(\eta_A)$ which exists because $\alpha$ is cartesian and
$I(\alpha)I(\eta_A)=1_A$ (this is a consequence of $I(u)=1_{IA}$). The morphism $\mu$ is obtained by
applying  the L-condition (Definitions \ref{def:actionsystem}(3) and \ref{def:Loday}) to the   diagram
\[\xymatrix{GJE\ar@{-->}[r]^{\nu}\ar@<0ex>[d]_{GJ(\alpha)}&E\ar@{-->}[r]^(.4){\mu}\ar@<-.5ex>[d]_{\alpha}&GFA\\GJA
\ar[r]^{v}&A\ar@<-.5ex>[u]_{\beta}\ar[ur]_{\eta_A},}\]
which satisfies the needed conditions to guarantee the existence of $\mu$ such that $\mu\beta=\eta_A$ and
$I(\mu)=1_{FA}$. It remains to show that $\mu$ is uniquely determined by this two conditions.
The morphism $\nu$ is uniquely determined because $\alpha$ is cartesian and hence, by the uniqueness property
in the L-condition, we conclude that also $\mu$ is uniquely determined.
\item[(v)] It follows from the Whitehead sequence constructed in the previous item.
\item[(vi)] Having a Whitehead sequence $(A,u,v)$ and using the construction on the previous item we obtain
$\alpha_1$ and $\beta_1$ together with a new Whitehead sequence  $(A_1,\mu_1,\nu_1)$. This gives us the first element of the infinite sequence. We can continue the sequence by replacing $(A_0,u,v)$ with $(A_1,\mu_1,\nu_1)$ and
thus successively iterate in order to obtain $(A_{n},\mu_{n},\nu_{n})$ for an arbitrary $n$. At each level
$i=1,2,\ldots$, the morphism $\beta_i$ is completely determined by $\alpha_i\beta_i=1_{A_{i-1}}$ and
$I(\beta_i)=I(\eta_{A_{i-1}})$. In the same way the morphism $\alpha$, being a cartesian morphism,
is completely determined by $GI(\alpha_{i+1})\eta_{{A_i}}=\mu_i$. But, since $\mu_i$ itself is determined
by $\mu_i\beta_i=\eta_{A_{i-1}}$ and $I(\mu_i)=1_{I(A_i)}$, the two equations
\begin{eqnarray*}
GI(\alpha_{i+1})\eta_{A_i}\beta_{i}&=&\eta_{A_{i-1}}\\
I(\alpha_{i+1})I(\eta_{A_i})&=&1_{IA_i}
\end{eqnarray*}
uniquely determine $\alpha_i$.
\end{enumerate}
\end{proof}

We are now going to see the main examples of action-systems that motivated the definitions above.

\section{Pointed sets, groups and abelian groups}\label{sec:examples}

Let $\B$ be the category of abelian groups and  $\A$ the category  $\B\times \B$ with $I$ the second projection,
$J$ the first projection and $G$ the diagonal functor. The triple of functors $(I,G,J)$ is an action system of
$\A$ over $\B$. As we sill see, the same is true for the category of pointed sets and, more generally, in any
category $\B$ provided it is pointed,  has binary coproducts and such that, for every two objects  $X$ and $B$,
the morphism $\iota_X\colon{X\to X+B}$ is the kernel of $[0,1]\colon{X+B\to B}$.

Some simple observations presented next to support our claims are to be compared
with the respective items from Definition \ref{def:actionsystem} of an action-system:
\begin{enumerate}
\item the functor $I$ is a fibration and $J(\alpha)$ is an isomorphism if and only if $\alpha$ is cartesian;
\item every $A=(X,B)$ in $\A$ has an object $X+B$ in   $\B$  and an arrow
\[(\iota_X,\iota_B)\colon{(X,B)\to (X+B,X+B)}\]
which is organic and universal;
\item to check that the L-condition holds we have to consider a diagram of solid arrows of the form
\[\xymatrix{(X,X)\ar@{-->}[r]^{(1,k)}\ar[rd]_{(1,h)}&(X,Y)\ar@{-->}[r]^{(k,1)}\ar@<-.5ex>[d]_{\alpha}&(Y,Y)\\&(X,B)\ar@<-.5ex>[u]_{\beta}\ar[ru]_{(k,s)}}\]
where we assume that $\alpha$ is cartesian, which means that, up to an isomorphism, we can write it as
$\alpha=(1_X,\alpha_2)$, and $(k,s)$ is an exact patch, which means that $(k,s)$ is a jointly epimorphic cospan
and there exists $p\colon{Y\to B}$ with $ps=1_B$ and $k$ the kernel of $p$;
the remaining assumptions only give $\alpha_2k=h$ and we easily confirm the existence of a unique Whitehead sequence
(dashed arrows) satisfying the desired equations.

Note that a Whitehead sequence $(A,u,v)$, in this case, is completely determined by either $I(v)$ or $J(u)$.
In other words, it is completely determined by a morphism $h\colon{X\to B}$ and it is of the form
\[\xymatrix{(X,X)\ar[r]^{(1,h)}&(X,B)\ar[r]^{(h,1)}&(B,B).}\]

\end{enumerate}

Another example, in fact the main example since it was the main motivation of this work, is the case where $\B$ is
the category of groups and $\A$ is the category of group actions.

Classically, an action of a group $B$ on a set $X$ is a map $\xi \colon{B \times X \to X}$ assigning to every pair
$(b,x)$ in $B \times X$ an element $b\cdot x$ in $X$  such that $1\cdot x=x$ and $(bb')\cdot x=b\cdot(b'\cdot x)$.
Equivalently it may be presented as a group homomorphism
\[
\phi \colon{ B \to \Aut(X)}
\]
from the group $B$ to the automorphism group of $X$.
Another approach consists on considering the group $B$ as a one object groupoid and an action as a functor
\[
B \to \Set
\]
assigning the set $X$ to the (only) object of the groupoid $B$ and
an automorphism of $X$ to each morphism in the groupoid $B$ (that is to each element of the group $B$).
A convenient notation that illustrates this situation is the following one.
\begin{eqnarray*}
\xymatrix{B\ar[r]^{X}&\Set}\\
\xymatrix{\circ\ar[d]^{b}="b" & X_{\circ}\ar[d]_{X_b}="Xb" \\ \circ & X_{\circ} \ar@{|->}"b";"Xb" }
\end{eqnarray*}
In this language the conditions above are written as
\[X_1=1_{X_{\circ}} \] and \[ X_{b'} X_{b} =X_{b'b}. \]

Again, in classical terms, a morphism between actions is a pair $(f,g)$
\[
\xymatrix{
(X,\xi,B) \ar[d]^{(f,g)} \\
(X',\xi',B')
}
\]
in which $g\colon{B\to B'}$ is a group homomorphism while $f\colon{X\to X'}$ is a map, such that
\[
f(b\cdot x)=g(b)\cdot f(x).
\]
Equivalently, it may be considered as a morphism in a (super) comma-category
\[
\xymatrix{
B \ar[d]_{g} \ar[rr]^{X} & \ar@{}[d]|{\Downarrow f} & \Set \\
B' \ar[rr]^{X'} & & \Set
}
\]
where $f:X \to X' g$ is a natural transformation.

It is clear that instead of the category $\Set$ we can consider other categories, obtaining there
an appropriate notion of group action. In particular, if we consider the category $\Grp$ of groups
we  obtain the category of
group actions on groups.

Let us consider now the case of an action-system where $\B$ is the category of groups and $\A$ is the
category of group actions on groups. An object $A$ in $\A$ is a pair $(X,B)$ in which $B$ is a group
(considered as a one object groupoid) and $X\colon{B\to\Grp}$ is a functor. The morphisms are the pairs
$(f,g)$ with $g\colon{B\to B'}$ a group homomorphism and $f\colon{X\to X'g}$ a natural transformation.

In this case $I$ is the second projection,  $J$ is the first projection
(in the sense that $J(X,B)=X_{\circ}$) and, for every group $B$, $G(B)=(\bar{B},B)$ where
$\bar{B}\colon{B\to \Grp}$ corresponds to the action by conjugation of $B$ onto itself,
that is $\bar{B}_\circ=B$ and $\bar{B}_b(b')=bb'b^{-1}$.

It follows that $(I,G,J)$ is an action-system of $\A$ over $\B$ in which the Whitehead sequences are precisely
the crossed-modules of groups.
Indeed it is not difficult to check that a Whitehead sequence is determined by a pair $(A,h)$ where $A$ is an
object in $\A$, $h:JA \to IA$ is a morphism in $\B$, and there exist two morphisms $u$ and $v$
 \[
\vcenter{\xymatrix{GJA \ar[r]^-{v} & A \ar[r]^-{u} & GIA }}
\]
such that
\[
I(u)=1_{IA},\ J(v)=1_{JA}, \  I(v)=J(u)=h.
\]
In other words a Whitehead sequence becomes a property on the object $A$ and the morphism $h$ which is equivalent to
the two well-known conditions for a crossed module, namely
\begin{eqnarray}
h(b\cdot x)=bh(x)b^{-1} \label{xmod-1}\\
h(x)\cdot x' =x+x'-x \label{xmod-2}
\end{eqnarray}
in which we write $X=JA$ additively, $B=IA$ multiplicatively and denote by $b\cdot x=X_b(x)$ the result of
the action of the element $b$ in $B$ on the element $x$ in $X$. Condition  (\ref{xmod-1}) is equivalent to the
existence of $u$, while condition (\ref{xmod-2}) is equivalent to the existence of $v$.

The functor $I$ is a fibration: the cartesian lifting of a morphism $g\colon{B'\to B}$ in $\B$ along an action $(X,B)$
in $\A$ is given by
\[
\xymatrix{
B' \ar[d]_{g} \ar[rr]^{Xg} & \ar@{}[d]|{\Downarrow 1} & \Grp \\
B \ar[rr]^{X} & & \Grp
}
\]
where $1$ denotes the identity natural transformation of the functor $Xg$. If $\alpha$ is a cartesian morphism
in $\A$ then $J(\alpha)$ is an isomorphism in $\B$. To each action $(X,B)$ in $\A$ we can associate
the semidirect product diagram
\[\xymatrix{X_{\circ}\ar[r]^(.35){k}&F(X,B)&B\ar[l]_(.35){s}}\]
in which $F(X,B)=X_{\circ}\rtimes B$ is the set of pairs $(x,b)\in X_{\circ}\times B$ with the operation
\[(x,b)+(x',b')=(x+X_b(x'),bb')\] and $k$, $s$ are the canonical injections. This diagram is an exact patch
and, moreover, the pair $(k,s)$ can be seen as a universal arrow $$(k,s)\colon{(X,B)\to GF(X,B)}.$$

In order to conclude that the triple $(I,G,J)$ is an action-system of $\A$ over $\B$ it remains to analyse
the L-condition.
In this case it simplifies to a diagram in $\A$ as the one displayed below
\[\xymatrix{(\bar{X_{\circ}},X_{\circ})\ar@{-->}[r]^{(1,k)}\ar[rd]_{(1,h)}&(X\alpha_2,Y)\ar@{-->}[r]^{(k,1)}
\ar@<-.5ex>[d]_(.3){(1_{X\alpha_2},\alpha_2)}&(\bar{Y},Y)\\&(X,B)\ar@<-.5ex>[u]_(.6){(1_X,s)}\ar[ru]_{(k,s)}}\]
in which $\alpha_2s=1_B$ and $\alpha_2k=h$. This diagram comes from assuming that $\alpha=(1_{X\alpha_2},\alpha_2)$
is a cartesian morphism and that all the conditions in the statement of the L-condition are satisfied.
The extra piece of information is the assumption that $f=(k,s)$ is a organic morphism. From this we have to show that
$(1,k)$ and $(k,1)$ are morphisms in $\A$. The fact that $(1,h)$ is a morphism implies that (in fact it is equivalent to)
$Xh$ being equal to the conjugation action on $X_{\circ}$, or in other words $Xh=\bar{X}_{\circ}$. From here we can
conclude
that $(1,k)$ is a morphism since we have $X\alpha_2k=Xh=\bar{X}_{\circ}$.

The requirement that $(k,1)$ is a morphism in $\A$ is equivalent to the requirement that
\[k(\alpha_2(y)\cdot x')=y+k(x')-y\] holds for all $x'\in X_{\circ}$ and all $y\in Y$
(note that we write $X_{\alpha_2(y)}(x')$ as $\alpha_2(y)\cdot x'$ in order to simplify notation).
To prove this condition we now make use of the assumption that the morphism $(k,s)$ is a organic morphism,
which means that the cospan
\[\xymatrix{X_{\circ}\ar[r]^{k}&Y&B\ar[l]_{s}}\]
is an exact patch and hence, every element $y\in Y$ can be written in a unique way as $y=(x,b)$
with $x\in X_{\circ}$ and $b\in B$ and, moreover, $\alpha_2(y)=h(x)+b$. It is now an easy calculation
to verify the desired condition since we have $h(x)\cdot x'=x+x'-x$ because $Xh=\bar{X}_{\circ}$.

\section{A simplicial construction}\label{sec:simp}

In this section we introduce a simplicial construction which will be used in the proof of the main result.
We construct a simplicial object in a category $\B$ from a sequence of cartesian split epimorphisms
in a category $\A$, which is equipped with a \emph{realization} functor into the category of points in $\B$.

Let $\A$ and $\B$ be two categories and suppose that it is given a functor
\[
\A \to \Pt(\B)
\]
from the category $\A$ into the category of points (i.e. split epimorphisms) in $\B$. 
We call such functor a realization functor since it allows to consider (or realise) an object in $\A$ 
as a split epimorphism in $\B$. Giving such a functor is to give an ordered pair of functors
\[
F,I:\A \To \Pt(B)
\]
 (we think of $F$ as the domain functor and of $I$ as the  codomain functor) together with two natural transformations
\[
\pi:F \To I \;\; \text{and} \;\; \iota:I\to F
\]
which are related by the following condition
\[
\pi\iota=1_I.
\]
With this data, $(F,I,\pi,\iota)$, we are able to associate to every $A$ in $\A$ a split epimorphism in $\B$ of the form
\[
\xymatrix{FA \ar@<0.5ex>[r]^{\pi_A} & IA \ar@<0.5ex>[l]^{\iota_A}.}
\]

In the proof of the following proposition we explain how to construct a simplicial object in the category $\B$, 
using the canonical split epimorphisms associated to each object $A$ in $\A$, together with a sequence of 
cartesian split epimorphisms in $\A$.

\begin{proposition}\label{prop:simplicialconstruction}
Let $(F,I,\pi,\iota)\colon{\A\to\Pt(\B)}$ be a functor from $\A$ into the category of split epimorphisms in $\B$. 
Suppose that for every split epimorphism in $\A$,
\[
\xymatrix{E \ar@<0.5ex>[r]^{\alpha} & A \ar@<0.5ex>[l]^{\beta},}
\]
if $\alpha$ is $I$-cartesian then the pair $(F\beta,\iota_E)$ is jointly epimorphic. Then, every sequence of 
split epimorphisms in $\A$ of the form
\begin{equation}\label{eq:splitepiseqinA}
\xymatrix{
... & A_n
\ar@<0.5ex>[r]^{\alpha_n}
& A_{n-1}
\ar@<0.5ex>[l]^{\beta_n}
& ... & A_2
\ar@<0.5ex>[r]^{\alpha_2}
& A_1
\ar@<0.5ex>[r]^{\alpha_1}
\ar@<0.5ex>[l]^{\beta_2}
& A_0
\ar@<0.5ex>[l]^{\beta_1}
}
\end{equation}
in which $\alpha_n$ is cartesian for all $n$, and
\begin{eqnarray}
IA_n & = & FA_{n-1} \nonumber\\
I(\alpha_n) \iota_{n-1} & = & 1_{IA_{n-1}} \label{eq:splitepiseqinAconditions}\\
I(\alpha_{n+1}) F(\beta_{n}) & = & 1_{IA_{n}},\nonumber
\end{eqnarray}
induces a simplicial object in $\B$.
\end{proposition}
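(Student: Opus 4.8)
The plan is to exhibit the simplicial object $X=X_\bullet$ of $\B$ with underlying objects
\[
X_0=IA_0,\qquad X_n=FA_{n-1}\quad(n\ge1),
\]
which is consistent because $IA_n=FA_{n-1}$ by hypothesis; in particular, for $n\ge1$ the object $X_n$ comes with the canonical split epimorphism $\pi_{A_{n-1}}\colon FA_{n-1}\to IA_{n-1}=X_{n-1}$ and its section $\iota_{A_{n-1}}$. The simplicial operators are built by induction on the degree. The \emph{outer} operators are read directly off the data: for $n\ge1$ set
\[
d_n=\pi_{A_{n-1}},\qquad d_{n-1}=I(\alpha_n),\qquad s_n=\iota_{A_n},\qquad s_{n-1}=F(\beta_n)
\]
(so in degree $0$ one has only $s_0=\iota_{A_0}$). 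The \emph{inner} operators $d_i,s_i$ with $0\le i\le n-2$ are defined recursively by
\[
d_i^{\,n}=F\!\left(\widehat{d_i^{\,n-1}}\right),\qquad s_i^{\,n}=F\!\left(\widehat{s_i^{\,n-1}}\right),
\]
where $\widehat{d_i^{\,n-1}}$ and $\widehat{s_i^{\,n-1}}$ are liftings of $d_i^{\,n-1}\colon IA_{n-1}\to IA_{n-2}$ and $s_i^{\,n-1}\colon IA_{n-1}\to IA_n$ to morphisms of $\A$ along the appropriate cartesian map $\alpha_k$. The first thing to check is that these liftings are available: the compatibility equations they require turn out to be simplicial identities of degree $\le n-1$, so the definition bootstraps. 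The joint-epimorphism hypothesis --- that a morphism out of $FA_k$ is determined by its composites with $F\beta_k$ and $\iota_{A_k}$ --- is used at this stage to see that the liftings are essentially unique, hence that $X$ is well defined up to isomorphism independently of the choices made.

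It remains to verify the simplicial identities
\[
d_id_j=d_{j-1}d_i\ (i<j),\qquad s_is_j=s_{j+1}s_i\ (i\le j),\qquad
d_is_j=\begin{cases}s_{j-1}d_i,& i<j,\\ \mathrm{id},& i\in\{j,j+1\},\\ s_jd_{i-1},& i>j+1.\end{cases}
\]
I would group these into three families. The collapsing identities $d_is_j=\mathrm{id}$ in which an outer operator occurs unwind directly to $\pi_{A_k}\iota_{A_k}=\mathrm{id}$ and to the two standing equations $I(\alpha_n)\iota_{n-1}=\mathrm{id}$ and $I(\alpha_{n+1})F(\beta_n)=\mathrm{id}$, using naturality of $\pi$ and $\iota$ and functoriality of $F$ and $I$. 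The identities involving only inner operators, or an inner operator together with an outer operator of the form $F(-)$, reduce to the corresponding degree-$(n-1)$ identities by applying $F$ to both sides --- this is exactly what the recursive shape of the definition is engineered to allow. The genuinely mixed identities --- those that pair $\pi_{A_{n-1}}$ or $I(\alpha_n)$ with an inner operator --- are the delicate ones: here I would precompose both sides with the jointly epimorphic pair $(F\beta_{n-1},\iota_{A_{n-1}})$ attached to the cartesian morphism $\alpha_{n-1}\colon A_{n-1}\to A_{n-2}$, and reduce each of the two resulting equalities, through naturality, the standing equations, and the recursive definitions, to a simplicial identity of strictly lower degree that has already been proved.

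The main obstacle is the combinatorial bookkeeping. One must fix the indexing of the recursively defined inner operators so that \emph{all} of the identities above come out with the right offsets, and one must run the two inductions --- ``the inner operators are definable'' and ``the operators satisfy the identities'' --- simultaneously, feeding lower-degree identities into the justification of the higher-degree liftings. Apart from that, the only non-formal ingredient is the joint-epimorphism hypothesis on $(F\beta,\iota_E)$, which is what lets an equality of morphisms out of some $FA_k$ be tested after restriction along $F\beta_k$ and $\iota_{A_k}$; everything else in the argument is functoriality of $F$ and $I$, naturality of $\pi$ and $\iota$, and the three conditions $IA_n=FA_{n-1}$, $I(\alpha_n)\iota_{n-1}=\mathrm{id}$, $I(\alpha_{n+1})F(\beta_n)=\mathrm{id}$ imposed on the sequence.
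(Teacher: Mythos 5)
Your construction coincides with the paper's: the same underlying objects ($IA_0$ and $FA_{n-1}=IA_n$), the same outer operators $\pi$, $I(\alpha)$, $\iota$, $F(\beta)$, the same recursive definition of the inner operators as $F$ applied to cartesian lifts of the lower-degree operators, and the same use of the jointly epimorphic pair $(F\beta,\iota)$ to establish the commutativities that make those lifts possible and to verify the mixed simplicial identities (the paper's equation~(\ref{eq-for-associativity}) is exactly an instance of your ``precompose with the jointly epimorphic pair and reduce to lower degree'' step). The paper also leaves the full combinatorial verification as ``a routine but demanding task,'' so your proposal is correct and at essentially the same level of detail and by essentially the same route.
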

 Note that we denote $\pi_{A_n}$ and $\iota_{A_n}$ by $\pi_n$ and $\iota_n$ and omit some parenthesis, 
 so that $I(A)$ becomes $IA$.
\begin{proof}
The simplicial object has the following form
\[
\xymatrix@=3em{
...& IA_3=FA_2
\ar@<6ex>[r]^-{\pi_2}
\ar@<0ex>[r]^-{I(\alpha_3)}
\ar@<-6ex>[r]^-{F(\alpha_2)}
\ar@<-12ex>[r]^-{F^2(\alpha_1)}
& IA_2=FA_1
\ar@<6ex>[r]^-{\pi_1}
\ar@<0ex>[r]^-{I(\alpha_2)}
\ar@<-6ex>[r]^-{F(\alpha_1)}
\ar@<-3ex>[l]_-{\iota_2}
\ar@<3ex>[l]_-{F(\beta_2)}
\ar@<9ex>[l]_-{F^2(\beta_1)}
& IA_1=FA_0
\ar@<6ex>[r]^-{\pi_0}
\ar@<0ex>[r]^-{I(\alpha_1)}
\ar@<-3ex>[l]_-{\iota_1}
\ar@<3ex>[l]_-{F(\beta_1)}
& IA_0
\ar@<-3ex>[l]_-{\iota_0}
}
\]
\[
\xymatrix@=5em{
\cdots & IA_{n+1}=FA_n
\ar@<6ex>[r]^-{\pi_n}
\ar@<0ex>[r]^-{I(\alpha_{n+1})}
\ar@<-6ex>[r]^-{F(\alpha_n)}
\ar@<-12ex>[r]^-{F^2(\alpha_{n-1})}
\ar@<-15ex>@{}[r]^-{\vdots}
\ar@<-21ex>[r]^-{F^i(\alpha_{n-i+1})}
\ar@<-24ex>@{}[r]^-{\vdots}
\ar@<-30ex>[r]^-{F^n(\alpha_{1})}
& IA_n=FA_{n-1}
\ar@<-3ex>[l]_-{\iota_n}
\ar@<3ex>[l]_-{F(\beta_n)}
\ar@<9ex>[l]_-{F^2(\beta_{n-1})}
\ar@<18ex>[l]_-{F^i(\beta_{n-i+1})}
\ar@<27ex>[l]_-{F^n(\beta_{1})}
& \cdots}
\]
in which $F^2(\alpha_1)=F(F(\alpha_1)^*)$ with $F(\alpha_1)^*$ the unique morphism in $\A$ such that 
$\alpha_1 F(\alpha_1)^*=\alpha_1\alpha_2$ and $I(F(\alpha_1)^*)=F(\alpha_1)$, as illustrated in the 
following picture
\[
\xymatrix@=3em{
& A_1 \ar[d]^{\alpha_1} & & IA_1 \ar[d]^{I\alpha_1} \\
A_2
\ar@{-->}[ru]^{F(\alpha_1)^*}
\ar[r]_{\alpha_1\alpha_2}
& A_0
& IA_2
\ar[ru]^{F(\alpha_1)}
\ar[r]_{I(\alpha_1\alpha_2)}
& IA_0,
}
\]
that exists because $\alpha_1$ is $I$-cartesian and the triangle on the right is commutative (see equation (\ref{eq-for-associativity}) below). Similarly, $F^2(\beta_1)=F(F(\beta_1)^*)$ with $F(\beta_1)^*$ the unique morphism in $\A$ such that  $\alpha_2 F(\beta_1)^*=1_{A_1}$ and $I(F(\beta_1)^*)=F(\beta_1)$, as displayed in the following picture
\[
\xymatrix@=3em{
& A_2 \ar[d]^{\alpha_2} & & IA_2 \ar[d]^{I\alpha_2} \\
A_1
\ar@{-->}[ru]^{F(\beta_1)^*}
\ar@{=}[r]_{}
& A_1
& IA_1
\ar[ru]^{F(\beta_1)}
\ar@{=}[r]_{}
& IA_1.
}
\]

In a similar fashion we can obtain $F^i(\alpha_{n-i+1})$ and $F^i(\beta_{n-i+1})$ for all $i$ up to $n$.
The details are omitted since we will not work with $n$ greater than 2.

The necessary equations for the construction of $F^i(\alpha_{n-i+1})$ are satisfied because the pair
\[ (F(\beta_n), \iota_n)   \]
is jointly epimorphic for all $n$. Indeed, for example, the construction of $F^2(\alpha_{n-1})$ depends on the equation
\begin{equation}\label{eq-for-associativity}
I(\alpha_n)F(\alpha_n)=I(\alpha_n\alpha_{n+1})
\end{equation}
which is true because we have
\[
I(\alpha_n)F(\alpha_n)F(\beta_n)=I(\alpha_n)=I(\alpha_n\alpha_{n+1})F(\beta_n)
\]
and (since $\iota$ is natural)
\[
I(\alpha_n)F(\alpha_n)\iota_n=I(\alpha_n)\iota_{n-1}I(\alpha_n)=I(\alpha_n)=I(\alpha_n\alpha_{n+1})\iota_n.
\]

Using the same technique it is possible to check that all the simplicial equations are satisfied, 
a routine but demanding task.
\end{proof}

Let us now consider a simple example of this simplicial construction.

Let $\B$ be a pointed category with binary coproducts. Take $\A$ to be the category $\B\times \B$ and, 
for every pair $(X,B)$ of objects in $\B$, define
\begin{eqnarray*}
I(X,B)=B\\
F(X,B)=X+B\\
\pi_{X,B}=[0,1]\colon{X+B\to B}\\
\iota_{X,B}=\iota_B\colon{B\to X+B}.
\end{eqnarray*}
In this case the functor $I$ is a fibration and a morphism $\alpha=(\alpha_1,\alpha_2)$ in $\A$ is cartesian 
if and only if $\alpha_1$ is an isomorphism. Moreover, for any split epimorphism
\[
\xymatrix{(E_1,E_2) \ar@<0.5ex>[r]^{(\alpha_1,\alpha_2)} & (A_1,A_2) \ar@<0.5ex>[l]^{(\beta_1,\beta_2)}}
\]
in $\A$, if $\alpha_1$ is an isomorphism then the cospan
\[\xymatrix{A_1+A_2\ar[r]^{\beta_1+\beta_2}&E_1+E_2&E_2\ar[l]_(.4){\iota_{E_2}}}\]
 is jointly epimorphic (observe that $(\beta_1+\beta_2)\iota_{A_1}\alpha_1=\iota_{E_1}$).

Now, in the particular case of abelian groups, a sequence such as the one displayed in (\ref{eq:splitepiseqinA}) 
with $\alpha_n$ cartesian for all $n$ and satisfying equations $(\ref{eq:splitepiseqinAconditions})$ 
is completely determined, up to isomorphism, by a morphism \[h\colon{X\to B}\] and it is of the following form
\begin{eqnarray*}
A_0&=&(X,B)\\
A_1&=&(X,X+B)\\
A_2&=&(X,X+(X+B))\cong(X,2X+B)\\
A_n&=&(X,nX+B)\\
\alpha_1&=&(1_X,[h,1_B])\\
\beta_1&=&(1_X,\iota_B)\\
\alpha_2&=&(1_X,[\iota_X,1_{X+B}])\\
\beta_2&=&(1_X,\iota_{X+B})\\
\alpha_{n+1}&=&(1_X,[\iota_X,1_{nX+B}])\\
\beta_{n+1}&=&(1_X,\iota_{nX+B}).
\end{eqnarray*}
In other words, it is completely determined by the first element of the sequence. This is not true in general 
but it gives a way to generate examples. Going back again to a category $\B$, pointed with binary coproducts, 
we can consider a sequence of the form just describd and, by Proposition \ref{prop:simplicialconstruction}, 
we are able to construct the following simplicial object
\begin{equation}
\xymatrix@=3em{
X+(2X+B)
\ar@<6ex>[r]^-{[0,1]}
\ar@<0ex>[r]^-{[\iota_X,1_{2X+B}]}
\ar@<-6ex>[r]^-{1+[\iota_X,1]}
\ar@<-12ex>[r]^-{1+(1+[h,1])}
&
{\begin{matrix}
X+(X+B)\\\cong\\2X+B
\end{matrix}}
\ar@<6ex>[r]^-{[0,1]}
\ar@<0ex>[r]^-{[\iota_X,1_{X+B}]}
\ar@<-6ex>[r]^-{1+[h,1_B]}
\ar@<-3ex>[l]_-{\iota_{2X+B}}
\ar@<3ex>[l]_-{1+\iota_{X+B}}
\ar@<9ex>[l]_-{1+(1+\iota_B)}
& X+B
\ar@<6ex>[r]^-{[0,1]}
\ar@<0ex>[r]^-{[h,1]}
\ar@<-3ex>[l]_-{\iota_{X+B}}
\ar@<3ex>[l]_-{1+\iota_B}
& B
\ar@<-3ex>[l]_-{\iota_B}
}
\end{equation}
which, for simplicity, we truncated at level 3.

\section{The category of Whitehead sequences}\label{sec:cat}

Let $(I,G,J)$ be a triple of functors  as displayed in  $(\ref{eq:IGJ})$ such that $$IG=1_{\B}=JG.$$ 
We consider the category $\W(\A)$ whose objects are the Whitehead sequences in $\A$ (see Definition \ref{def:Whitehead}). 
A morphism $f\colon{(A,u,v)\to(A',u',v')}$ between two Whitehead sequences is a morphism $f\colon{A\to A'}$ in $\A$ 
such that the two squares below are commutative
\[\xymatrix{GJA\ar[r]^{v}\ar[d]_{GJ(f)}&A\ar[r]^{u}\ar[d]_{f}&IGA\ar[d]^{GI(f)}\\GJA'\ar[r]^{v'}&A'\ar[r]^{u'}&IGA'.}\]

When, moreover, the triple of functors $(I,G,J)$ is an action-system of $\A$ over $\B$ 
(definition \ref{def:actionsystem}) and denoting by $(F,G,\eta,\varepsilon)$ the system in which $F$ is the left 
adjoint of $G$ (Proposition \ref{prop:basicprops}(i)), then we can define a full subcategory of $\W(\A)$, 
denoted by $\W^{*}(\A)$, as follows: a Whitehead sequence $(A,u,v)$ is an object in $\W^{*}(\A)$ if every 
cartesian morphism $\alpha\colon{E\to A}$ on its induced sequence of cartesian morphisms 
(as in Proposition \ref{prop:basicprops}(vi)) has the property that the square\[
\vcenter{\xymatrix{FE \ar[rr]^-{\varepsilon_{IE}F(\pi_E)} \ar[d]_{F\alpha}  & & IE \ar[d]^{I\alpha} \\
FA \ar[rr]^-{\varepsilon_{IA}F(\pi_A)} & & IA}}
\]
is a pullback square. The morphisms $\pi_E\colon{E\to GIE}$ and $\pi_A\colon{A\to GIA}$ are the components of 
the natural transformation that is obtained as in the item (ii) of Proposition \ref{prop:basicprops}. For example, 
in the case of the category of groups, together with the action-system of group actions over it 
(as illustrated in Section \ref{sec:examples}), we have that to each cartesian morphism 
$\alpha\colon{E\to A}$ its associated square in the sense above is of the form
\[\xymatrix{J(E)\ltimes I(E) \ar[r]^(.6){[0,1]}\ar[d]_{J(\alpha)\ltimes I(\alpha)}& I(E)\ar[d]^{I(\alpha)}\\J(A)\ltimes I(A)\ar[r]^(.65){[0,1]}&I(A)}\]
which is always a pullback square. Indeed, it simply follows from the fact that $\alpha$ is cartesian and 
hence $J(\alpha)$ is an isomorphism.

We denote by $\Simp(\B)$ the category of internal simplicial objects in $\B$ and consider the category of internal 
categories in $\B$, $\Cat(\B)$, as a full subcategory of $\Simp(\B)$.

\begin{theorem}\label{thm:1}
Let $(I,G,J)$ be an action-system of $\A$ over $\B$. There is a functor from $\W(\A)$ into $\Simp(\B)$ such that 
its restriction to $\W^{*}(\A)$ factors through $\Cat(\B)$
\[\xymatrix{\W(\A)\ar[r]&\Simp(\B)\\W^{*}(\A)\ar@{^{(}->}[u]\ar@{-->}[r]&\Cat(\B).\ar@{^{(}->}[u]}\]
\end{theorem}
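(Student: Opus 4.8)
The plan is to build the functor in two stages, using the machinery already set up. First I would take a Whitehead sequence $(A,u,v)$ and invoke Proposition~\ref{prop:basicprops}(vi) to produce its canonical infinite sequence of cartesian split epimorphisms
\[
\xymatrix{\cdots & A_3 \ar[r]^{\alpha_3} & A_2 \ar[r]^{\alpha_2} & A_1 \ar[r]^{\alpha_1} & A_0=A.}
\]
By Proposition~\ref{prop:basicprops}(iii) we have a realization functor $\A\to\Pt(\B)$, and by items (i)--(ii) this is precisely a datum $(F,I,\pi,\iota)$ of the shape required in Section~\ref{sec:simp}, with $FA$ the value of the left adjoint and $\pi_A=\varepsilon_{IA}F(\pi_A)$, $\iota_A=I(\eta_A)$. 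The identities in Proposition~\ref{prop:basicprops}(vi) ($IA_n=FA_{n-1}$, $I(\alpha_n)\iota_{n-1}=1$, $I(\alpha_{n+1})F(\beta_n)=1$) are exactly the hypotheses (\ref{eq:splitepiseqinAconditions}) of Proposition~\ref{prop:simplicialconstruction}; and the joint-epimorphism hypothesis of that proposition ($(F\beta_n,\iota_n)$ jointly epic whenever the relevant morphism is cartesian) is guaranteed because $\eta_A$ is organic, hence in particular a patch, so $(J(\eta_A),I(\eta_A))$ is jointly epic, and one transports this along the cartesian liftings. Applying Proposition~\ref{prop:simplicialconstruction} then yields a simplicial object in $\B$; assigning this to $(A,u,v)$ defines the object part of the functor $\W(\A)\to\Simp(\B)$.

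Next I would check functoriality. A morphism $f\colon(A,u,v)\to(A',u',v')$ in $\W(\A)$ commutes with $u,v$, hence (by the uniqueness clauses in Proposition~\ref{prop:basicprops}(iv)--(vi), which pin down each $A_n$, $\alpha_n$, $\beta_n$ up to unique isomorphism from the data of the previous level) induces a compatible family of morphisms $A_n\to A'_n$ commuting with all the $\alpha_n,\beta_n$. Applying $I$ and $F$ levelwise and using naturality of $\pi$ and $\iota$, this produces a simplicial morphism between the associated simplicial objects; that the assignment respects identities and composition is immediate from the uniqueness. So we obtain a genuine functor $\W(\A)\to\Simp(\B)$.

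Finally, for the factorization through $\Cat(\B)$: by definition $\Cat(\B)\subseteq\Simp(\B)$ is the full subcategory of those simplicial objects that are internal categories, and the standard recognition criterion is that the simplicial object is $2$-coskeletal with the ``Segal maps'' at levels $\geq 2$ being isomorphisms, equivalently that the canonical square expressing $X_2$ as $X_1\times_{X_0}X_1$ is a pullback. For a sequence in the image of Proposition~\ref{prop:basicprops}(vi), level $2$ of the constructed simplicial object is $IA_2=FA_1$ mapping down via $I(\alpha_2)$ and $\pi_1$ to $IA_1=FA_0$, and the Segal square is exactly
\[
\vcenter{\xymatrix{FA_1 \ar[rr]^-{\varepsilon_{IA_1}F(\pi_{A_1})} \ar[d]_{F\alpha_1} & & IA_1 \ar[d]^{I\alpha_1}\\ FA_0 \ar[rr]^-{\varepsilon_{IA_0}F(\pi_{A_0})} & & IA_0}}
\]
(and analogously one level up for the higher Segal maps). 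When $(A,u,v)\in\W^{*}(\A)$, this square is a pullback \emph{by definition} of $\W^{*}(\A)$, for every cartesian $\alpha$ in the induced sequence; so the Segal condition holds at all levels and the simplicial object is an internal category. Since $\W^{*}(\A)$ is a full subcategory and $\Cat(\B)\hookrightarrow\Simp(\B)$ is full, the restricted functor factors as claimed.

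The main obstacle I anticipate is the first step — verifying that the data $(F,I,\pi,\iota)$ coming from Proposition~\ref{prop:basicprops} really satisfies, for the \emph{specific} sequences arising in part (vi), the joint-epimorphism hypothesis of Proposition~\ref{prop:simplicialconstruction}, and then identifying the Segal maps of the resulting simplicial object with exactly the squares used to define $\W^{*}(\A)$. Both are conceptually clear but require unwinding the adjunction $(F,G,\eta,\varepsilon)$, the natural transformation $\pi$ of item (ii), and the cartesian-lifting descriptions through several layers; this is the ``routine but demanding'' bookkeeping already flagged in the proof of Proposition~\ref{prop:simplicialconstruction}, and it is where care is needed to ensure the identifications at level $2$ are literally the right pullback squares rather than merely isomorphic ones.
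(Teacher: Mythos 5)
Your proposal is correct and follows essentially the same route as the paper: build the infinite sequence of cartesian split epimorphisms from Proposition \ref{prop:basicprops}(vi), verify the joint-epimorphism hypothesis of Proposition \ref{prop:simplicialconstruction} via the organicity of $\eta_A$ (transported along cartesian liftings using that $J(\beta_i)$ is invertible), obtain the simplicial object, get functoriality from cartesianness of the $\alpha_i$, and observe that the pullback squares characterizing the internal-category (Segal) condition at levels $1$ and $2$ are precisely the squares used to define $\W^{*}(\A)$. The only cosmetic difference is that you phrase the last step via the coskeletal/Segal recognition criterion, whereas the paper checks the internal-category equations directly against a translation table; both reduce to the same two pullback squares.
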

\begin{proof}
Following Proposition \ref{prop:basicprops}, to every Whitehead sequence 
$(A,u,v)$ we can associate an infinite sequence of cartesian split epimorphisms $\alpha_i$, with section $\beta_i$,
\[
\xymatrix{
\cdots &A_3 \ar[r]^{\alpha_3} & A_2 \ar[r]^{\alpha_2} & A_1 \ar[r]^(.4){\alpha_1} & A_0=A
}
\]
such that $I(\alpha_1)=\varepsilon_{IA}F(u)$ and for every $i=1,2,\ldots$
\begin{eqnarray*}
I(\alpha_{i+1})&=&\varepsilon_{IA_i}F(\mu_i)\\
\alpha_i\beta_i&=&1_{A_{i-1}}\\
I(\beta_i)&=&I(\eta_{A_{i-1}}).
\end{eqnarray*}
Here, $(F,G,\eta,\varepsilon)$ is the adjunction as in Proposition \ref{prop:basicprops}(i), and 
$(A_1,\nu_1,\mu_1)$ is the Whitehead sequence obtained (as in item (v) of Proposition \ref{prop:basicprops}) 
from the Whitehead sequence $(A,u,v)$. Similarly 
 we obtain $(A_{i+1},\nu_{i+1},\mu_{i+1})$ from  $(A_i,\nu_i,\mu_i)$ for all $i\in \mathbb{N}$.

It follows that
\begin{eqnarray*}
I(A_i)&\cong& F(A_{i-1})\\
I(\alpha_i)I(\eta_{A_{i-1}})&=&
I(\alpha_i)I(\beta_{i})=1_{IA_{i-1}}\\
I(\alpha_{i+1})F(\beta_i)&=&\varepsilon_{A_i}F(\mu_i)F(\beta_i)=\varepsilon_{IA_i}F(\mu_i\beta_i)\\
&=&\varepsilon_{IA_i}F(\eta_{A_{i-1}})=\varepsilon_{FA_{i-1}}F(\eta_{A_{i-1}})\\
&=&1_{FA_{i-1}}=1_{IA_i}.
\end{eqnarray*}

In order to make use of Proposition \ref{prop:simplicialconstruction} with the sequence of cartesian morphisms as 
constructed above, the natural transformation $\pi_i=\varepsilon_{IA_i}F(\pi_{A_i})$ (with $\pi_{A_i}$ obtained as in 
item (ii) of Proposition \ref{prop:basicprops}) and with $\iota_i=I(\eta_{A_i})$ we have to verify that 
the pair $(F(\beta_i),I(\eta_{A_i}))$ is jointly epimorphic.  This is a consequence of the fact that, 
for each $A\in \A$,  $\eta_{A}$ is a organic morphism (Definition \ref{def:organic}) and hence the cospan 
$(J(\eta_A),I(\eta_A))$ is jointly epimorphic. In particular, this implies that  
$(F(\beta_i),I(\eta_{A_i}))$ is jointly epimorphic because each  $J(\beta_i)$ is an isomorphism 
(since $\alpha$ is cartesian then $J(\alpha)$ is an isomorphism and so also $J(\beta)$ is an isomorphism) 
and we have
\[J(\eta_{A_i})=F(\beta_i)
J(\eta_{A_i-1})J(\beta_i)^{-1}.\]

From here we can construct a simplicial object, in the same way as it was done in the proof of 
Proposition \ref{prop:simplicialconstruction}, which is displayed up to level 3 (to compare it with the 
notion of an internal category we will not need to go further) as follows:
 
\[
\xymatrix@=3em{
FA_2
\ar@<6ex>[r]^-{\pi_2}
\ar@<0ex>[r]^-{I(\alpha_3)}
\ar@<-6ex>[r]^-{F(\alpha_2)}
\ar@<-12ex>[r]^-{F^2(\alpha_1)}
& IA_2=FA_1
\ar@<6ex>[r]^-{\pi_1}
\ar@<0ex>[r]^-{I(\alpha_2)}
\ar@<-6ex>[r]^-{F(\alpha_1)}
\ar@<-3ex>[l]_-{\iota_2}
\ar@<3ex>[l]_-{F(\beta_2)}
\ar@<9ex>[l]_-{F^2(\beta_1)}
& IA_1=FA_0
\ar@<6ex>[r]^-{\pi_0}
\ar@<0ex>[r]^-{I(\alpha_1)}
\ar@<-3ex>[l]_-{\iota_1}
\ar@<3ex>[l]_-{F(\beta_1)}
& IA_0
\ar@<-3ex>[l]_-{\iota_0}
}
\]
%
%where $\pi_i=\varepsilon_{IA_{i}} F(\pi_{A_i})$, $\iota_i= I(\eta_{A_i})$ and
%$\beta_i :A_{i-1} \To A_i$ is is to be defined as the mate for the cartesian morphism $\alpha_i :A_i \to A_{i-1}$ uniquely determined with property that $\alpha_i \beta_i=1$ and $I(\beta_i)=I(\eta_{A_{i-1}})$.
%
%The sequence of cartesian morphisms
%\[
%\xymatrix{
%A_3 \ar[r]^{\alpha_3} & A_2 \ar[r]^{\alpha_2} & A_1 \ar[r]^{\alpha_1} & A_0=A
%}
%\]
%is defined step by step as follows.
%First, $\alpha_1:A_1 \to A_0=A$ is obtained as the cartesian lifting of the morphism $\varepsilon_{IA}F(u):FA\to IA$. Whit $\alpha_1$ we obtain $\beta_1$. Next, $\alpha_2:A_2 \to A_1$ is obtained as the cartesian lifting of $\varepsilon_{IA_1}F(\mu_1)$ where
%\[
%\mu_1:A_1 \to GFA
%\]
%is the unique morphism with $I\mu=1$ and $\mu_1\beta_1=\eta_{A}$, whose existence is guaranteed by the assumption (h) above. Having $\alpha_2$ we also have $\beta_2$.
%
%In order to reaped the process and obtain $\alpha_3$, we have to produce a Whitehead sequence, and in fact we have
%\[
%\vcenter{\xymatrix@=3em{GJA=GJA_1 \ar[r]^-{\nu} & A_1 \ar[r]^-{\mu_1} & GIA_1=GFA }}
%\]
%with $\nu$ the unique morphism such that $\alpha_1 \nu=v$ and $I(\nu)=J(\eta_A)$; whose existence is ensured because $\alpha_1$ is cartesian and $\varepsilon_{IA}F(u)J(\eta_A)=h$.
%
%Having this, we now use (h) again and obtain $\alpha_3$ as the cartesian lifting of $\varepsilon_{IA_2}F(\mu_2)$ where
%\[
%\mu_2:A_2 \to GFA_1
%\]
%exists because of assumption (h), and it is the unique morphism satisfying $I\mu_2=1$ and $\mu_2\beta_2=\eta_{A_1}$.

Again, checking the simplicial conditions is a routine (although a demanding) task.

This shows that we can assign a simplicial object to every Whitehead sequence and, moreover, that this construction 
is functorial. Indeed, if $$f\colon{(A,v,u)\to (A',u',v')}$$ is a morphism between Whitehead sequences then it can be 
lifted to the level of infinite sequences of cartesian split epimorphisms so that it respects the 
simplicial equations. This is possible because the morphisms $\alpha_i$ are cartesian and we will have
\[\xymatrix{A_{i+1}\ar[r]^{\alpha_{i}}\ar[d]_{f_{i}}&A_{i-1}\ar[d]^{f_{i-1}}\\A'_{i}\ar[r]^{\alpha'_{i}}&A'_{i-1}}\]
for all $i\in \mathbb{N}$ with $f_0=f$.

This gives us a functor from $\W(\A)$ into $\Simp(\B)$. In order to be able to compare the simplicial structure defined 
above with the one of an internal category, we now give a diagram with the standard notation for an internal category 
object in $\B$.
An internal category in $\B$ is a diagram of the form
\begin{equation}\label{int-cat}
\xymatrix@=3em{
C_3
\ar@<6ex>[r]^-{q_2}
\ar@<0ex>[r]^-{m_2}
\ar@<-6ex>[r]^-{m_1}
\ar@<-12ex>[r]^-{q_1}
& C_2
\ar@<6ex>[r]^-{p_2}
\ar@<0ex>[r]^-{m}
\ar@<-6ex>[r]^-{p_1}
\ar@<-3ex>[l]_-{i_2}
\ar@<3ex>[l]_-{i_0}
\ar@<9ex>[l]_-{i_1}
& C_1
\ar@<6ex>[r]^-{d}
\ar@<0ex>[r]^-{c}
\ar@<-3ex>[l]_-{e_2}
\ar@<3ex>[l]_-{e_1}
& C_0
\ar@<-3ex>[l]_-{e}
}
\end{equation}
where $C_0$ and $C_1$ are, respectively, the object of objects and the object of morphisms, 
while $d,e,c$ are, respectively, domain, identity, and codomain; $C_2$ is the object of composable pairs, 
obtained by the following pullback (with $p_1$, $p_2$ the canonical projections and $e_1$, $e_2$ the induced inclusions)
\[
\xymatrix{
C_2
\ar@<.5ex>[r]^{p_2}
\ar@<-.5ex>[d]_{p_1}
& C_1
\ar@<.5ex>[l]^{e_2}
\ar@<-.5ex>[d]_{c}
\\ C_1
\ar@<-.5ex>[u]_{e_1}
\ar@<.5ex>[r]^{d}
& C_0
\ar@<-.5ex>[u]_{e}
\ar@<.5ex>[l]^{e}.
}
\]
Similarly, $C_3$ is the object of composable triples, specifically calculated for generalized objects as
\[
C_3=\{((f,g),(h,k)) \mid (f,g),(h,k) \in C_2 , g=hk\}
\]
in other words it is the object in the following pullback diagram, of $m$ along $p_2$
\[
\xymatrix{
C_3
\ar@<.5ex>[r]^{q_2}
\ar@<-.5ex>[d]_{m_1}
& C_2
\ar@<.5ex>[l]^{i_2}
\ar@<-.5ex>[d]_{m}
\\ C_2
\ar@<-.5ex>[u]_{i_1}
\ar@<.5ex>[r]^{p_2}
& C_1
\ar@<-.5ex>[u]_{e_1}
\ar@<.5ex>[l]^{e_2}
.}
\]
Note that $C_3$ can also be given by the following pullback 
\[
\xymatrix{
C_3
\ar@<.5ex>[r]^{q_2}
\ar@<-.5ex>[d]_{q_1}
& C_2
\ar@<.5ex>[l]^{i_2}
\ar@<-.5ex>[d]_{p_1}
\\ C_2
\ar@<-.5ex>[u]_{i_1}
\ar@<.5ex>[r]^{p_2}
& C_1
\ar@<-.5ex>[u]_{e_1}
\ar@<.5ex>[l]^{e_2}
}
\]
which is equivalent, being then $C_3$ 
\[
C_3=\{((f,g),(h,k)) \mid (f,g),(h,k) \in C_2 , g=h\}.
\]
To the reader not familiar with the above notation for internal categories, and in order to easily compare 
it with the more standard simplicial one, it may be helpful to consider the particular case where $C_0=1$ 
and write $m(x,y)=xy$, in this case we have
\begin{eqnarray}
p_2(x,y) & = & y \nonumber\\
p_1(x,y) & = & x \nonumber\\
e_1(x) & = & (x,1) \nonumber\\
e_2(y) & = & (1,y) \nonumber\\
q_2(x,y,z) & = & (y,z) \nonumber\\
q_1(x,y,z) & = & (x,y) \nonumber\\
m_1(x,y,z) & = & (x,yz) \nonumber\\
m_2(x,y,z) & = & (xy,z) \nonumber\\
i_1(x,y) & = & (x,y,1) \nonumber\\
i_2(y,z) & = & (1,y,z) \nonumber\\
i_0(x,z) & = & (x,1,z). \nonumber\\
\end{eqnarray}

Table \ref{table:01} translates the (relevant) simplicial equations into the definition of internal category. 
The first column contains the equation in the context of an internal category; 
the middle column presents the equivalent simplicial equation, obtained by the simplicial construction above; 
the last column gives the corresponding equation in the context of $\A$ and $\W(\A)$ where we can easily see why 
the equation is true: lines 1 to 6, by definition; lines 7, 10 and 11 by naturality; lines 9 and 12, 
see equation (\ref{eq-for-associativity}); it remains to explain line 8 --- it follows from the fact 
that $I(\mu)=1=I(\pi_{A_1})$ and $\pi_0J(\eta_A)=0=J(\pi_{A_1})$, since $\eta_A$ is organic for every $A$ in $\A$.

\begin{table}[ht]
\centering
\begin{tabular}{|c|c|c|c|}
\hline
 & \Cat(\B) & \Simp(\B) & \W(\A)
\\\hline
 1  &  $de=1$ & $\pi_0\iota_0=1$ & $I(\pi_A)=1$
\\\hline
 2  & $ce=1$  & $I(\alpha_1)\iota_0=1$ & $I(\mu)=1$
\\\hline
  3 & $p_2e_2=1$  & $\pi_1\iota_1=1$ &  $I(\pi_{A_1})=1$
\\\hline
 4  &  $me_2=1$   &  $m\iota_1=1$  &  $I(\mu)=1$
\\\hline
 5  &  $me_1=1$   &  $mF(\beta_1)=1$  &  $\mu_1\beta_1=\eta_A$
\\\hline
 6  &  $p_1e_1=1$   &  $F(\alpha_1)F(\beta_1)$  &  $\alpha_1\beta_1=1$
\\\hline
 7  &  $cp_2=dp_1$   &  $I(\alpha_1)\pi_1=\pi_0 F(\alpha)$  &  $\pi_A \alpha=GI(\alpha)\pi_{A_1}$
\\\hline
 8   &  $dp_2=dm$   &  $\pi_0\pi_1=\pi_0 m$  &  $G(\pi_0)\mu_1=G(\pi_0)\pi_{A_1}$
\\\hline
 9   &  $cp_1=cm$   &  $I(\alpha_1) F(\alpha_1)=I(\alpha_1)I(\alpha_2)$  &  $u\a_1=GI(\a_1)\mu_1$
\\\hline
 10   &  $p_2e_1=ed$   &  $\pi_1F(\b_1)=\iota_0\pi_0$  &  $\pi_{A_1}\b_1=GI(\b_1)\pi_A$
\\\hline
 11 &  $p_1e_2=ec$   &  $F(\a_1)\iota_1=\iota_0 I(\a_1)$  &  $F(\a_1)I(\eta_{A_1})=I(\eta_A)I(\a_1)$
\\\hline
 12  &  $mm_1=mm_2$   &  $I(\a_2)F(\a_2)=I(\a_2)I(\a_3)$  &  $\mu_1\a_2=GI(\a_2)\mu_2$
\\\hline
\end{tabular}
\caption{Translation between equations: from the language of internal categories, to simplicial objects, to Whitehead sequences.}\label{table:01}
\end{table}

%\vspace{1cm}

We now have the following: if the squares
\[
\xymatrix{
FA_1
\ar[r]^{\pi_1}
\ar[d]_{F(\a_1)}
& IA_1
\ar[d]^{I(\a_1)}
\\ FA
\ar[r]^{\pi_0}
& IA} \;\; , \;\;
\xymatrix{
FA_2
\ar[r]^{\pi_2}
\ar[d]_{F(\a_2)}
& IA_2
\ar[d]^{I(\a_2)}
\\ FA_1
\ar[r]^{\pi_1}
& IA_1}
\]
are pullbacks, then the simplicial object constructed above is, in fact, an internal category object in $\B$.
This proves that $\W^{*}(\A)$ factors through $\Cat(\B)$. Indeed an object of $\W(\A)$ is in $\W^{*}(\A)$ as soon as every morphism $\alpha$ in its induced infinite sequence of cartesian split epimorphisms has the property that the square
\[
\xymatrix@=3em{
FE
\ar[r]^{\varepsilon_{IE} F(\pi_E)}
\ar[d]_{F(\a)}
& IE
\ar[d]^{I(\a)}
\\ FA
\ar[r]^{\varepsilon_{IA} F(\pi_A)}
& IA}
\]
is a pullback.
\end{proof}

\section{Groupoids and Whitehead sequences}\label{sec:main result}

We are now interested in the case when there is an equivalence between the category of Whitehead sequences in 
$\A$ and the category of internal groupoids in  $\B$, as it is the case, for example, for the category of groups 
and group actions.

\begin{theorem}\label{thm:2}
Let $(I,G,J)$ be an action-system of $\A$ over $\B$. If the pair of functors $(I,J)$ is jointly conservative and there is an equivalence of categories \[
\A \sim \Pt(\B),
\]
compatible with the system $(I,G,J)$, then there is an equivalence of categories
\[
\W^{*}(\A) \sim \Gpd(\B).
\]
\end{theorem}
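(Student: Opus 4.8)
The plan is to build the equivalence $\W^{*}(\A)\sim\Gpd(\B)$ by exhibiting quasi-inverse functors and checking they restrict from / extend to the functor $\W^{*}(\A)\to\Cat(\B)$ of Theorem~\ref{thm:1}. First I would observe that the hypothesis gives us an equivalence $\Phi\colon\A\to\Pt(\B)$ compatible with $(I,G,J)$, which in particular should mean that $I$ corresponds to the codomain functor $\Pt(\B)\to\B$, that $G$ corresponds to the functor $B\mapsto(1_B\colon B\to B)$, and that the left adjoint $F$ of $G$ (Proposition~\ref{prop:basicprops}(i)) corresponds to the domain functor $\Pt(\B)\to\B$; the natural transformation $\pi$ of Proposition~\ref{prop:basicprops}(ii) corresponds to the canonical retraction data of a point. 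Under this identification the simplicial object attached to a Whitehead sequence $(A,u,v)$ in the proof of Theorem~\ref{thm:1} becomes a genuine internal category in $\B$ (this is the content of Theorem~\ref{thm:1}, since $\A\sim\Pt(\B)$ forces the relevant squares to be pullbacks), so we already have a functor $\W^{*}(\A)\to\Cat(\B)$.

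The core of the argument is then two-fold. First, I would show that this functor lands in $\Gpd(\B)$, i.e. that every internal category arising this way is an internal groupoid. Here the point is that the infinite sequence of cartesian split epimorphisms $A_n\to A_{n-1}$ of Proposition~\ref{prop:basicprops}(vi) is, under the equivalence $\A\sim\Pt(\B)$, exactly the ``iterated semidirect product'' sequence; the morphism $v$ of the Whitehead sequence supplies precisely the data making the resulting category have inverses — concretely, $v$ together with $I(v)=J(u)$ produces the inversion morphism $C_1\to C_1$, and the Whitehead identities $J(v)=1$, $I(u)=1$ translate into the groupoid axioms $d\circ(\ )^{-1}=c$, $c\circ(\ )^{-1}=d$ and the cancellation identities. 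The joint conservativity of $(I,J)$ is what lets us verify these equalities: it suffices to check them after applying $I$ and $J$, where everything collapses to identities or isomorphisms by the Whitehead conditions and the fact that $J(\alpha)$ is iso for cartesian $\alpha$.

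Second, I would construct the quasi-inverse $\Gpd(\B)\to\W^{*}(\A)$. Given an internal groupoid $C$ in $\B$, its underlying point $(d\colon C_1\to C_0, e)$ is an object of $\Pt(\B)$, hence via $\Phi^{-1}$ an object $A$ of $\A$ with $IA\cong C_0$ and $FA\cong C_1$; then $u\colon A\to GIA$ is recovered from the composite $A\to GFA\xrightarrow{G(c)}GC_0=GIA$ (using $\eta_A$ and the codomain map $c$), and $v\colon GJA\to A$ is recovered from the inversion together with $\eta$. One checks $(A,u,v)$ is a Whitehead sequence using the groupoid axioms, and that it lies in $\W^{*}(\A)$ because under $\A\sim\Pt(\B)$ every cartesian morphism gives a pullback square automatically (as noted in the discussion preceding Theorem~\ref{thm:1} for the group case). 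Naturality of both assignments, and the two triangle identities exhibiting them as quasi-inverse, then follow by the usual diagram chases, again reduced via joint conservativity of $(I,J)$ to checking on $I$- and $J$-images.

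The main obstacle I anticipate is the bookkeeping for the inversion morphism and the proof that it satisfies the internal-groupoid equations: unwinding the simplicial/Whitehead translation of Table~\ref{table:01} to produce a two-sided inverse for composition, and showing it is natural in the Whitehead-sequence morphism $f$, is where the real work lies. Everything else — the adjunction identifications, the fact that the $\W^{*}$ condition is automatic under $\A\sim\Pt(\B)$, and the functoriality — should be routine once the compatibility of $\Phi$ with $(I,G,J)$ is spelled out precisely, which is itself a (minor) point requiring care, since ``compatible with the system'' must be interpreted so that $\Phi$ identifies not just $I$ but also $G$ and $J$ with the standard data on $\Pt(\B)$.
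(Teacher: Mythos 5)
Your overall architecture (use the compatible equivalence $\A\sim\Pt(\B)$ to identify $F$ with the domain functor, get an internal category from Theorem~\ref{thm:1}, then show it is a groupoid and build a quasi-inverse) matches the paper's, but the proposal is missing the one idea that makes the first half work, and you have explicitly deferred it. You propose to prove that the internal category attached to a $\W^{*}$-sequence has inverses by extracting an inversion morphism $C_1\to C_1$ directly from $v$ and verifying the groupoid identities via joint conservativity; you then flag this bookkeeping as ``where the real work lies'' without carrying it out. The paper avoids this construction entirely: it observes that the equivalence $\A\sim\Pt(\B)$ together with the joint conservativity of $(I,J)$ forces $\B$ to satisfy the Split Short Five Lemma (a morphism of split extensions that is invertible on kernels and on codomains corresponds under the equivalence to a morphism $f$ in $\A$ with $I(f)$ and $J(f)$ invertible, hence to an isomorphism), and then invokes the standard fact that in such a category every internal category is automatically an internal groupoid. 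Without either this argument or an actual construction of the inverse, the claim that the functor lands in $\Gpd(\B)$ is unproved; as it stands this is a genuine gap, not a routine verification.

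The quasi-inverse direction also diverges from the paper in a way that matters. You say $v$ is ``recovered from the inversion together with $\eta$,'' but the paper's construction of $v$ uses the \emph{multiplication}, not the inversion: one views $(m,1)$ as a morphism of points from $(dp_2,e_2e)$ to $(d,e)$, transfers it along the equivalence to a morphism $m^{*}\colon E\to A$ in $\A$, identifies $JE$ with $FA_h$ where $A_h$ is the cartesian lifting of $h=J(u)$, and obtains a comparison $\rho\colon A_h\to GJA$ whose $I$- and $J$-images are identities; joint conservativity then makes $\rho$ invertible and $v=h^{*}\rho^{-1}$. This is precisely where the hypothesis of joint conservativity earns its keep a second time, and your sketch does not supply a substitute. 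I would also caution that ``the $\W^{*}$ condition is automatic under $\A\sim\Pt(\B)$'' is asserted by analogy with the group case but not argued; you need the relevant squares to be pullbacks, which requires identifying them with the pullbacks defining $C_2$ and $C_3$ in the groupoid, not merely citing the equivalence.
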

\begin{proof}
Suppose we have an equivalence of categories
\[
\A \stackrel{\sim}{\longrightarrow} \Pt(\B)
\]
which is compatible with the action-system, that is, an object $A$ in $\A$ is realized as a point of the form
\[
\xymatrix@=3em{
FA \ar@<.5ex>[r]^{\varepsilon_{IA}F(\pi_A)}
& IA \ar@<.5ex>[l]^{I(\eta_A)}, 
}
\]
where $F$ is the left adjoint of $G$.
The equivalence allows us to assume that for any given split extension
\[
\xymatrix{
X \ar[r]^{k} & Y \ar@<.5ex>[r]^{p}
& B \ar@<.5ex>[l]^{s}
}
\]
we can find an object $A$ in $\A$ such that the following diagram commutes
\[
\xymatrix@=3em{
JA \ar[r]^{J(\eta_A)} \ar@{=}[d] & FA \ar@<.5ex>[r]^{\varepsilon_{IA}F(\pi_A)}
\ar[d]^{\cong} & IA \ar@<.5ex>[l]^{I(\eta_A)} \ar@{=}[d] \\
X \ar[r]^{k} & Y \ar@<.5ex>[r]^{p}
& IA \ar@<.5ex>[l]^{s}
.}
\]

This fact, together with the assumption that the pair of functors is jointly conservative, 
proves that $\B$ satisfies the Split Short Five Lemma and hence any internal category object 
in $\B$ is also a internal groupoid (see \cite{Borceux-Bourn} and references there).
It remains to prove that given a internal groupoid in $\B$ we can find a Whitehead sequence such that, 
after applying the simplicial construction, the original groupoid is recovered, up to isomorphism. 

The procedure is as follows.
Given a internal groupoid as in (\ref{int-cat}), using
\[
\xymatrix{
C_1 \ar@<.5ex>[r]^{d}
& C_0 \ar@<.5ex>[l]^{e}
}
\]
we obtain an object $A$ in $\A$ such that
\[
\xymatrix@=3em{
JA \ar[r]^{J(\eta_A)} \ar@{=}[d] & FA \ar@<.5ex>[r]^{\varepsilon_{IA}F(\pi_A)}
\ar[d]^{\cong} & IA \ar@<.5ex>[l]^{I(\eta_A)} \ar@{=}[d] \\
X \ar[r]^{k} & C_2 \ar@<.5ex>[r]^{d}
& C_0 \ar@<.5ex>[l]^{e}
.}
\]
The morphism $c$ gives
\[
u:A \to GIA
\]
with $u=G(c)\eta_A$ which is such that $I(u)=1$ and $J(u)=h=c \comp \ker(d)$.

In order to obtain $v:GJA\to A$ with $J(v)=1$ and $I(v)=J(u)=h$, we consider the pair $(m,1)$ as a morphism 
of points
\[
\xymatrix{
 C_2 \ar@<.5ex>[r]^{dp_2}
\ar[d]_{m} & C_0 \ar@<.5ex>[l]^{e_2 e} \ar@{=}[d] \\
C_1 \ar@<.5ex>[r]^{d}
& C_0 \ar@<.5ex>[l]^{e}
,}
\]
and transfer it, via the equivalence, from $\Pt(\B)$ to $\A$, in order to obtain, say
\[
\xymatrix{ E \ar[d]^{m^*} \\  A.}
\]
It follows that $JE=FA_h$ where $h^*:A_h\to A$ is the cartesian lifting of $h:JA \to IA$, given by $h=J(u)$ as
 defined above. This is possible because, on the one hand $JE$ is the kernel of $dp_2$, while on the other hand, 
 $FA_h$ is the pullback of $h$ along $d$.

In this way we have a morphism
\[
\xymatrix@=3em{JE=FA_h \ar[r]^-{J(m^*)} & JA}
\]
and, via the adjuntion (see Proposition \ref{prop:basicprops}(i)), we also have  a morphism
\[
\xymatrix@=5em{A_h \ar[r]^-{\rho=GJ(m^*)\eta_A} & GJA}
\]
such that $I(\rho)=1_{JA}$ and $J(\rho)=1_{JA}$. Now, using the fact that $I$ and $J$ are jointly 
conservative we conclude that $\rho$ is an isomorphism, and so we obtain the desired $v=h^* \rho^{-1}$.
This gives a Whitehead sequence
\[
\vcenter{\xymatrix{GJA \ar[r]^-{v} & A \ar[r]^-{u} & GIA }}
\]
such that, applying the simplicial construction to it, we obtain, up to isomorphism, the original groupoid as
\[
\xymatrix@=3em{
FA_1 \ar[r]^{I(\a_2)} \ar[d]_{\cong}
& FA
\ar@<1.5ex>[r]^{\pi_0)}
\ar@<-1.5ex>[r]_{I(\a_1)}
\ar[d]^{\cong}
& IA \ar@<0ex>[l]|{\iota_0} \ar@{=}[d] \\
C_2 \ar[r]^{m} &
C_1
\ar@<1.5ex>[r]^{d}
\ar@<-1.5ex>[r]_{c}
& C_0 \ar@<0ex>[l]|{e}
}
\]
and this completes the proof.
\end{proof}

\section{Conclusion}\label{sec:conclusion}

We conclude with an application of the previous result in the case where the category $\B$ is pointed and protomodular.

%Give an example of application in which $\A$ is a suitable subcategory of points in $\B$, and make a reference to \cite{MF:10a}.

In general, in order to have an action-system, we can always take $\A$ to be the category of all stable and 
exact patches in $\B$. Then, for an object $A=(X,Y,k,s,p)$ as in Definition \ref{def:exactpatch}, 
we define $I(A)=B$, $J(A)=X$ and $F(A)=Y$.
%Since an object $A$ in $\A$ is a stable  patch then the functor $I$ is a fibration. Moreover, if for every object  $B$ an object in $\B$ the
Moreover, if for every object $B$ in $\B$ the diagram
\[\xymatrix{B\ar[r]^(.4){\langle 1,0\rangle}&B\times B\ar@<.5ex>[r]^(.6){\pi_2}&B\ar@<.5ex>[l]^(.4){\langle 1,1\rangle}}\]
is a stable patch (as it is always the case in a pointed protomodular category) then we have a functor 
$G$ and the system $(I,G,J)$ is an action system of $\A$ over $\B$ provided that the L-condition  holds. 
In the case when $\B$ is a protomodular category, considering the system $(I,G,J)$ as before, 
if $f\colon{A\to Y'}$ is a organic morphism  then we have $Y'\cong FA$, which is an immediate consequence of the 
Split Short Five Lemma. This means that the L-condition  can be simplified and it becomes equivalent, in this context, 
to the following condition:
\begin{quote}
Every Peiffer graph is a multiplicative graph.
\end{quote}
In the paper \cite{MF+VdL:12} it is proved that if  $\B$ is a semi-abelian category then this condition is equivalent 
to the so-called \emph{Smith is Huq} condition.

As an application of Theorem \ref{thm:2} we can state a similar result to the one presented in \cite{MF:10a}
 concerning the description of internal groupoids in a pointed protomodular category.

\begin{quote}
Let $\B$ be a pointed and protomodular category in which every Peiffer graph is a multiplicative graph. Then, 
giving an internal groupoid in $\B$ is to give an exact patch \[\xymatrix{X_{}\ar[r]^{k}&Y&B\ar[l]_{s}}\]
together with a morphism
\[h\colon{X\to B}\] such that the two dashed arrows can be inserted in the diagram
\[\xymatrix{X\ar[r]^(.4){\langle 1,0\rangle}\ar@{=}[d]_{}&X\times X\ar@{-->}[d]&X\ar[l]_(.4){\langle 1,1\rangle}\ar[d]^{h}\\X\ar@{->}[d]_{h}\ar[r]^{k}&Y\ar@{-->}[d]&B\ar[l]_{s}\ar@{=}[d]\\B\ar[r]^(.4){\langle 1,0\rangle}&B\times B&B\ar[l]_(.4){\langle 1,1\rangle}}\]
 in order to make it commutative.
\end{quote}
Further details can be found in \cite{MF:10a}, see also \cite{B1,B2}.

\end{document}